\tikzset{every picture/.style={line width=0.75pt}}
\newtheorem{theorem}{Theorem}[section]
\newtheorem{prop}[theorem]{Proposition}
\newtheorem{lemma}[theorem]{Lemma}     
\newtheorem{fact}[theorem]{Fact}
\theoremstyle{definition}
\newtheorem{remark}[theorem]{Remark}
\newcommand{\ts}{\hspace{0.5pt}}
\newcommand{\nts}{\hspace{-0.5pt}}
\newcommand{\RR}{\mathbb{R}\ts}
\newcommand{\ZZ}{\mathbb{Z}}
\newcommand{\NN}{\mathbb{N}}
\newcommand{\QQ}{\mathbb{Q}}
\newcommand{\TT}{\mathbb{T}}
\newcommand{\XX}{\mathbb{X}}
\newcommand{\YY}{\mathbb{Y}}
\newcommand{\cK}{\mathcal{K}}
\newcommand{\cL}{\mathcal{L}}
\newcommand{\cO}{\mathcal{O}}
\newcommand{\vL}{\varLambda}
\newcommand{\vO}{\varOmega}
\newcommand{\vS}{\varSigma}
\newcommand{\oplam}{\mbox{\Large $\curlywedge$}}
\newcommand{\dd}{\ts\mathrm{d}}
\newcommand{\exend}{\hfill$\Diamond$}
\newcommand{\myfrac}[2]{\frac{\raisebox{-2pt}{$#1$}}
      {\raisebox{0.5pt}{$#2$}}}
\newcommand{\twovec}[2]{\bigl(\begin{smallmatrix} #1 \\ #2
      \end{smallmatrix}\bigr)}
\numberwithin{equation}{section}
\renewcommand{\@captionfont}{\small}
\DeclareMathOperator{\dotcup}{\dot\cup}
\DeclareMathOperator{\dist}{dist}
\DeclareFontFamily{U}{mathx}{\hyphenchar\font45}
\DeclareFontShape{U}{mathx}{m}{n}{ <5> <6> <7> <8> <9> <10>
   <10.95> <12> <14.4> <17.28> <20.74> <24.88> mathx10 }{}
\DeclareSymbolFont{mathx}{U}{mathx}{m}{n}
\DeclareMathAccent{\widecheck}{0}{mathx}{"71}
\newcommand{\defeq}{\mathrel{\mathop:}=}
\newcommand{\eqdef}{=\mathrel{\mathop:}}
\begin{document}

\title{Fibonacci direct product variation tilings}

\author{Michael Baake, Franz G\"{a}hler, Jan Maz\'a\v c}

\address{Fakult\"at f\"ur Mathematik, Universit\"at Bielefeld,
   Postfach 100131, 33501 Bielefeld, Germany}
\email{$\{$mbaake,gaehler,jmazac$\}$@math.uni-bielefeld.de}

% \author{Franz G\"ahler}

% \author{Jan Maz\'a\v c}

\keywords{Inflation rules, Tiling classes, Quasicrystals}

\subjclass[2010]{52C23, 37B50}

\begin{abstract} 
  The direct product of two Fibonacci tilings can be described as a
  genuine stone inflation rule with four prototiles. This rule admits
  various modifications, which lead to 48 different inflation rules,
  known as the direct product variations. They all result in tilings
  that are measure-theoretically isomorphic by the Halmos--von Neumann
  theorem. They can be described as cut and project sets with
  characteristic windows in a two-dimensional Euclidean internal
  space. Here, we analyse and classify them further, in particular
  with respect to topological conjugacy.
\end{abstract}

\maketitle
\section{Introduction}

The structure determination of perfect crystals from their diffraction
image consists of two steps, namely the extraction of the underlying
lattice from the support of the Bragg peaks and then the
reconstruction of the atomic positions from the scattering
intensities, which is a tricky inverse problem.

In the case of perfect quasicrystals, the analogous steps have to be
performed in the setting of cut and project sets. Concretely, one has
to identify the embedding lattice from the support of the Bragg
spectrum and then the window from the intensities. While different
structures with the same space group, in the fully periodic setting,
are always locally derivable from one another (by a simple
re-decoration of the fundamental cell), this is way more complex in
the quasiperiodic scenario.

Here, we demonstrate some of the new phenomena along the direct
product of two Fibonacci tilings of the plane and their altogether
$48$ \emph{direct product variations} (DPVs). In particular, we
provide a~finer classification of them, by showing a strong
topological conjugacy for one subclass. Concretely, we show in
Theorem~\ref{thm:new} that the $28$ DPVs with polygonal windows,
respectively the dynamical systems induced by them, are topologically
conjugate to one another, even though they are generally not mutually
locally derivable from one another, in the sense of
\cite[Ch.~5.2]{TAO}. This extends and completes the results of
\cite{BFG}.

\section{1D Fibonacci tiling}
Let us begin with a brief review of the well-known \emph{Fibonacci
  substitution} in one dimension, which is given by the following
substitution rule over the binary alphabet $\{a,b\}$,
\[
    \varrho : \; a \mapsto ab \, , \; b \mapsto a \ts .
\]
This symbolic substitution can be turned into a geometric inflation
rule with two prototiles (intervals) $a$, $b$ of length
$\tau = \frac{1}{2} \bigl( 1 + \mbox{\small $\sqrt{5}$} \, \bigr)$ and
$1$, respectively. Note that these lengths are the entries of a left
eigenvector of the corresponding substitution matrix
\begin{equation}\label{eq:fibmat}
    M^{}_{\varrho} \, = \, \begin{pmatrix} 1 & 1 \\ 1 & 0 \end{pmatrix}
\end{equation}
for its \emph{Perron--Frobenius} (PF) eigenvalue, which is $\tau$.
Taking any bi-infinite fixed point of $\varrho$ (more precisely, of
$\varrho^2$ in this case), one obtains a tiling of the real line. If
one assigns to each tile a special control point (say the left
endpoint of each interval), one gets a discrete point set
$\vL^{}_{\mathrm{F}} =\vL^{a}_{\mathrm{F}} \cup
\vL^{b}_{\mathrm{F}}$. Moreover, this set is a Delone set and is
obviously \emph{mutually locally derivable} (MLD) with the given
tiling, see \cite{TAO} for background and further details. This allows
us to identify these two representations of the fixed point. More
generally, for any tiling, we will always have both representations in
mind in what follows.

Recall that the fixed point can be chosen so that
$0\in \vL^{}_{\mathrm{F}}$ and thus
$\vL^{}_{\mathrm{F}} \subset \ZZ[\tau]$. It is useful to describe
$\vL^{}_{\mathrm{F}}$ as a cut and project set. This description is
based on the Minkowski embedding of $\ZZ [\tau]$ as a lattice in
$\RR^2$, namely as
\[
  \cL^{}_{1} \, \defeq  \,
  \bigl\{ (x,x') : x \in \ZZ [ \tau] \bigr\}.  
\]
Here, $'$ denotes the non-trivial field automorphism of $\QQ(\tau)$,
which maps $\tau$ to its algebraic conjugate
$ \sigma = \sfrac{-1}{\tau} = 1-\tau $. Then, following standard
arguments \cite[Ch.~7]{TAO}, one gets
\[
    \vL^{a,b}_{\mathrm{\mathrm{F}}} \, = \, \{ x\in \ZZ [\tau] :
    x' \in \vO^{}_{a,b} \} \ts ,  
\]
with
\begin{equation}\label{eq:win1DfiboA}
    \vO^{}_{\,a} \, = \, [\tau-2,\tau-1)
    \quad \text{and} \quad
    \vO^{}_{\, b} \, = \, [-1,\tau-2)
\end{equation}   
or with
\begin{equation}\label{eq:win1DfiboB}
    \vO^{}_{\, a} \, = \, (\tau-2,\tau-1]
    \quad \text{and} \quad
    \vO^{}_{\, b} \, = \, (-1,\tau-2] \ts ,
\end{equation}  
depending on the choice of the fixed point of the substitution; see
\cite[Ex.~7.3]{TAO}.

\section{2D Fibonacci tiling and its variations}

Having the 1D Fibonacci substitution, one can apply it in two
different directions in a~plane, say along the standard coordinate
axes. Considering all Cartesian products of tiles in the two
directions results in an inflation tiling of the plane with four
prototiles $T_0$, $T_1$, $T_2$, $T_3$ and
\begin{equation}\label{eq:fibosqrule}
  \raisebox{-14pt}{\includegraphics[width=0.8743\textwidth]
    {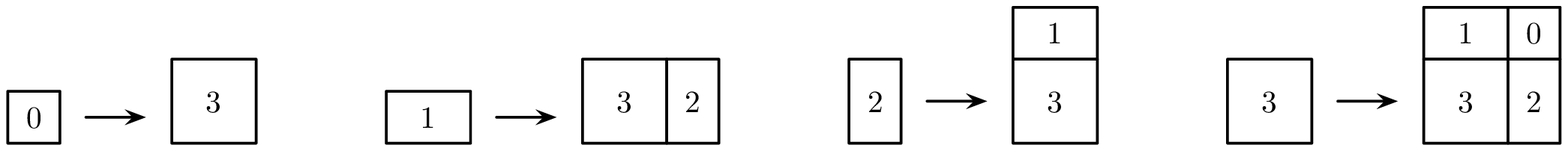}}
\end{equation}
which is stone inflation rule. Let us call the emerging tiling a
\emph{Fibonacci direct product} (DP) \emph{tiling} in two
dimensions. Similarly, one can proceed further and define
higher-dimensional product tilings. In our case, the corresponding
substitution matrix $M$ is given by
\begin{equation}\label{eq:2subst}
    M \, = \, \begin{pmatrix} 
    0 & 0 & 0 & 1 \\
    0 & 0 & 1 & 1 \\
    0 & 1 & 0 & 1 \\
    1 & 1 & 1 & 1 
    \end{pmatrix}  \, = \, 
    \begin{pmatrix} 0 & 1 \\ 1 & 1 \end{pmatrix} \otimes
    \begin{pmatrix} 0 & 1 \\ 1 & 1 \end{pmatrix}.
\end{equation}
The matrix is primitive and the left eigenvector (corresponding to the
PF eigenvalue~$\tau^2$) can be chosen to be
$\bigl(1,\tau,\tau,\tau^2\bigr)$. The entries are the areas of the
prototiles $T_i$. Following the same procedure as above (choosing the
lower left corners of the prototiles as their control points), one
obtains a point set $\vL = \bigcup_{i=0}^{3}\vL^{}_i$ which is MLD
with the Fibonacci DP tiling. Here, the sets $\vL_i$ satisfy the
equations
\begin{equation}\label{eq:ssr}
\begin{split}
  \vL^{}_{0} &\,=\, \tau \vL^{}_{3} + \twovec{\tau}{\tau}\ts ,\\
  \vL^{}_{1} &\,=\, \tau \vL^{}_{2} + \twovec{0}{\tau} \,\dotcup\,
                   \tau \vL^{}_{3} + \twovec{0}{\tau}\ts ,\\
  \vL^{}_{2} &\,=\, \tau \vL^{}_{1} + \twovec{\tau}{0} \,\dotcup\,
                   \tau \vL^{}_{3} + \twovec{\tau}{0}\ts ,\\
  \vL^{}_{3} &\,=\, \tau \vL^{}_{0} \,\dotcup\, \tau \vL^{}_{1} \,\dotcup\,
                   \tau \vL^{}_{2} \,\dotcup\, \tau \vL^{}_{3}\ts .
\end{split}
\end{equation}
This system of equations with expanding functions, often called a
\emph{matrix function system}, induces another function system, the so
called \emph{adjoint matrix function system}, which is an iterated
function system (IFS) \cite[Ch.~5]{Bernd}. It has a unique solution --
the prototiles $T^{}_{i}$.
\begin{equation}
\begin{split}
  T^{}_{0} &\,=\, \tau^{-1} T^{}_{3} \ts ,\\
  T^{}_{1} &\,=\, \tau^{-1}  T^{}_{2} + \twovec{1}{0} \,\cup\,
                   \tau^{-1}  T^{}_{3} \ts ,\\
  T^{}_{2} &\,=\, \tau^{-1}  T^{}_{1} + \twovec{0}{1} \,\cup\,
                   \tau^{-1}  T^{}_{3} \ts ,\\
  T^{}_{3} &\,=\, \tau^{-1}  T^{}_{0}+\twovec{1}{1} \,\cup\,
  \tau^{-1}  T^{}_{1} +\twovec{0}{1} \,\cup\,\tau^{-1}
  T^{}_{2}+\twovec{1}{0} \,\cup\, \tau^{-1}  T^{}_{3}\ts .
\end{split}
\end{equation}

The set $\vL$ can be obtained as a cut and project set following the
same steps as above. The lattice $\cL$ can be understood as Minkowski
embedding of $L=\ZZ[\tau] \times \ZZ[\tau]$, which reads
\begin{equation}\label{eq:lattice}
    \cL \, =\,  \ZZ \!\begin{pmatrix} \tau \\ 0 \\ \tau' \\ 0 \end{pmatrix} \, 
    \oplus\, \ZZ\! \begin{pmatrix} 1 \\ 0 \\ 1 \\ 0 \end{pmatrix} \, 
    \oplus\, \ZZ \!\begin{pmatrix} 0\\ \tau \\ 0 \\ \tau'  \end{pmatrix} \, 
    \oplus\, \ZZ \!\begin{pmatrix} 0 \\ 1 \\ 0 \\ 1  \end{pmatrix} \, = 
    \, \cL^{}_{1} \otimes \begin{pmatrix} 1 \\ 0 \end{pmatrix} \, 
    \oplus \,\cL^{}_{1} \otimes \begin{pmatrix} 0 \\ 1 \end{pmatrix} . 
\end{equation}

The projections $\pi$, $\pi_{\!_\perp} \colon \RR^4 \to \RR^2$ defined (for a
lattice point) via
\[
  \pi \begin{pmatrix}a\tau + b \\ c\tau +d \\
    a\tau' + b \\ c\tau' +d \end{pmatrix} \, = \,
    \begin{pmatrix}a\tau + b \\ c\tau +d\end{pmatrix}, \qquad 
    \pi_{\!_\perp} \begin{pmatrix}a\tau + b \\ c\tau +d \\
      a\tau' + b \\ c\tau' +d \end{pmatrix} \, = \,
    \begin{pmatrix}a\tau' + b \\ c\tau' +d\end{pmatrix} 
\]
and the star map
$^{\star}: \pi(\cL) \to \pi_{\!_\perp}(\cL) \eqdef L^{\star}$ acting as
$\twovec{a\tau + b}{c\tau +d} \mapsto \twovec{a\tau' + b}{c\tau' +d}$
constitute the cut and project scheme. This allows us to write $\vL$
as a cut and project set with a suitable window, namely
$\vO = \bigcup^{3}_{i=0} \vO_i$. This can be obtained from the
expanding function system \eqref{eq:ssr} by considering the star image
of it and taking the closure of the lifted sets, namely
$\vO^{}_{i} = \overline{\vL^{\star}_{i}}$. The resulting iterated
function system reads
\begin{equation}\label{eq:windowIFS}
  \begin{split}
    \vO^{}_{0} &\,=\, \sigma \vO^{}_{3} + \twovec{\sigma}{\sigma}\ts ,\\
  \vO^{}_{1} &\,=\, \sigma \vO^{}_{2} + \twovec{0}{\sigma} \,\cup\,
                 \sigma \vO^{}_{3} + \twovec{0}{\sigma}\ts ,\\
  \vO^{}_{2} &\,=\, \sigma \vO^{}_{1} + \twovec{\sigma}{0} \,\cup\,
                 \sigma \vO^{}_{3} + \twovec{\sigma}{0}\ts ,\\
  \vO^{}_{3} &\,=\, \sigma \vO^{}_{0} \,\cup\, \sigma \vO^{}_{1} \,\cup\,
                 \sigma \vO^{}_{2} \,\cup\, \sigma \vO^{}_{3}\ts.
  \end{split}
\end{equation}
Since it is a contraction (on $(\cK \RR^2)^4$, see \cite{Wicks_frac}
for notation and details), it has a unique solution by Banach's
contraction mapping principle. It can be verified easily that
\begin{equation}\label{eq:windows}
\begin{aligned}
  \vO^{}_{0} &= [-1,\tau-2] \times [-1,\tau-2] , &
  \vO^{}_{1} &= [\tau-2,\tau-1] \times [-1,\tau-2]\ts , \\
  \vO^{}_{3} &= [\tau-2,\tau-1] \times [\tau-2,\tau-1] ,
  & \vO^{}_{2} &= [-1,\tau-2] \times [\tau-2,\tau-1]\ts . 
\end{aligned}
\end{equation}
Then, 
\[ 
   \vL\, \subsetneq \, \oplam (\vO) \, = \, 
   \{ \pi(x) : x\in\cL, \ \pi_{\!_\perp}(x) \in \vO \} \ts ,
\]
where $\oplam (\vO)$ is a regular model set \cite{TAO}.  Note that the
inclusion is proper due to the position of $\vO$ in the internal
space. This results in a small subtlety with the boundaries of the
windows, similar to what we saw in \eqref{eq:win1DfiboA} versus
\eqref{eq:win1DfiboB}, which we suppress here. Since the set $\vO$
satisfies $\overline{\vO^{\circ}} = \vO$ and has a boundary of measure
0, we are working with \emph{regular} model sets only.

\section{Sheared tiling and topological conjugacy}

\begin{figure}
  \centerline{\includegraphics[width=0.7\textwidth]{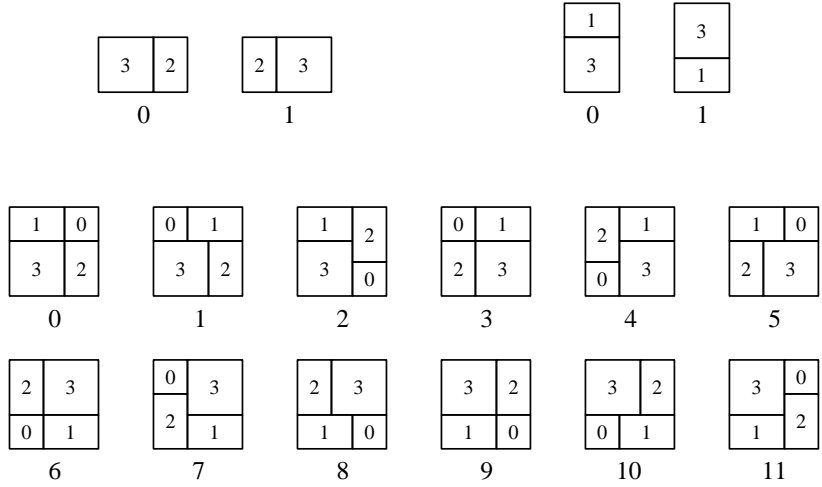}}
  \caption{Labels for the possible decompositions of the prototiles of
    type $1$ and $2$ (top row), and for the $12$ decompositions of the
    prototile of type $3$ (bottom rows).}
   \label{fig:scrambled}
\end{figure}

The original stone inflation rule \eqref{eq:fibosqrule} can be
modified. Indeed, there is a certain degree of freedom in how one can
rearrange level-1 supertiles of given prototiles. The resulting
tilings are referred to as \emph{direct product variation tilings}
(DPV tilings) and were introduced in \cite{Nat-primer,FR}, and studied
in \cite{BFG} for the Fibonacci case. Let us recall the
parametrisation used there. To each tiling, a triple of numbers
$(i^{}_{1},i^{}_{2},i^{}_{3})$ with $i^{}_{1},i^{}_{2}\in\{0,1\}$ and
$i^{}_{3}\in\{0,1,\ldots,11\}$ is assigned, based on the inflation
rules shown in the Figure \ref{fig:scrambled}. All 48 cases share the
same substitution matrix $M$ and, based on the diffraction spectra and
the equivalence theorem for pure point diffraction versus dynamical
spectra \cite{BL}, are measure-theoretically isomorphic as follows.

\begin{theorem}[\cite{BFG}, Theorem 5.2]
  The\/ $48$ inflation tiling dynamical systems that emerge from the
  above DPVs all have pure point dynamical spectrum, namely\/
  $L^{\circledast} \nts\times L^{\circledast}$, where
  $L^{\circledast} = \ZZ[\tau]/ \sqrt{5}$. These systems are thus
  measure-theoretically isomorphic by the Halmos--von Neumann theorem.
  Each individual tiling, via the control points, leads to a Dirac
  comb with pure point diffraction measure. \qed
\end{theorem}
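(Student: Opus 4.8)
The plan is to show that each of the $48$ DPV tilings is a regular model set for the \emph{same} cut and project scheme $(\RR^2,\RR^2,\cL)$ constructed above, and then to read off both the spectral type and the group of eigenvalues from the scheme alone, which is common to all $48$ cases. Pure point diffractivity of a regular model set, combined with the equivalence of pure point diffraction and pure point dynamical spectrum from \cite{BL}, yields the pure point dynamical spectrum; the measure-theoretic isomorphism then follows from the Halmos--von Neumann theorem once the group of eigenvalues is identified, in every case, with $L^{\circledast}\times L^{\circledast}$.

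First I would fix a DPV, encoded by its triple $(i^{}_{1},i^{}_{2},i^{}_{3})$, and write its control-point system in the form \eqref{eq:ssr}, i.e.\ as a matrix function system with the same expansion $\tau$ and the same displacement module $\ZZ[\tau]\times\ZZ[\tau]$; only the displacement vectors change from one DPV to the next, reflecting the rearranged level-$1$ supertiles. Applying the star map and passing to closures, exactly as in the derivation of \eqref{eq:windowIFS}, produces a window IFS
\begin{equation*}
  \vO^{}_{i} \, = \, \bigcup_{j} \bigl( \sigma\ts\vO^{}_{\kappa(i,j)} + s^{}_{ij} \bigr),
\end{equation*}
with $s^{}_{ij}\in\ZZ[\tau]\times\ZZ[\tau]$ and with the same incidence data as $M$. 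Since $|\sigma|<1$, this is a contraction on $(\cK\RR^2)^4$, so Banach's principle gives a unique compact solution $(\vO^{}_{0},\dots,\vO^{}_{3})$, hence a window $\vO=\bigcup_{i}\vO^{}_{i}$ together with the inclusion $\vL\subseteq\oplam(\vO)$. Primitivity of $M$ guarantees that the hull is minimal and uniquely ergodic, so there is a well-defined dynamical spectrum to speak of.

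The decisive step --- and the main obstacle --- is to verify that each $\vO$ is a \emph{regular} window, namely $\overline{\vO^{\circ}}=\vO$ and $\vol(\partial\vO)=0$, since only then do the regular model set criterion \cite{TAO} and \cite{BL} apply. For the original DP the window is the polygon \eqref{eq:windows}, and the same holds for the $28$ DPVs with polygonal windows; but the remaining cases produce genuinely self-affine windows with fractal boundary, where regularity is not automatic. I would obtain it from a density/sandwiching argument: the autocorrelation density of $\vL$ is fixed by the PF data of $M$ and is therefore identical for all $48$ DPVs, while $\vL\subseteq\oplam(\vO)$; establishing the complementary inner bound $\oplam(\vO^{\circ})\subseteq\vL$ then sandwiches the density and forces $\vol(\partial\vO)=0$. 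Equivalently, one shows that the $L^{\star}$-translates of the pieces $\vO^{}_{i}$ tile the internal space $\RR^2$ up to measure $0$, these windows being the internal-space analogues of Rauzy fractals; the balance $\tau^{2}=\sigma^{-2}$ between the number of IFS pieces and the area scaling $\sigma^{2}$ is exactly what makes such a tiling possible. Controlling these fractal boundaries uniformly across the non-polygonal cases is the genuinely technical part of the argument.

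With regularity in hand, each DPV is a regular model set for $(\RR^2,\RR^2,\cL)$, hence pure point diffractive, and by \cite{BL} the associated dynamical system has pure point dynamical spectrum. The group of eigenvalues of a regular model set depends only on the underlying cut and project scheme, not on the particular window, so all $48$ DPVs share the same eigenvalue group; a direct computation from the dual lattice $\cL^{\star}$ of \eqref{eq:lattice} identifies it with $L^{\circledast}\times L^{\circledast}$, where $L^{\circledast}=\ZZ[\tau]/\sqrt5$. The Halmos--von Neumann theorem, applied to these ergodic systems with equal pure point spectra, then yields the asserted measure-theoretic isomorphism of all $48$ systems, while the Dirac comb statement is just the reformulation that the control-point comb of each individual tiling carries a pure point diffraction measure.
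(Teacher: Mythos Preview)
The paper does not prove this theorem itself; it is imported verbatim from \cite{BFG} (as Theorem~5.2 there) and closed with a \qed, the only justification being the sentence immediately preceding it: ``based on the diffraction spectra and the equivalence theorem for pure point diffraction versus dynamical spectra \cite{BL}''. Your proposal is precisely the argument that sentence encodes --- show each DPV is (MLD with) a regular model set for the common scheme $(\RR^2,\RR^2,\cL)$, deduce pure point diffraction, transfer to pure point dynamical spectrum via \cite{BL}, read off the eigenvalue group $L^{\circledast}\times L^{\circledast}$ from the dual of $\cL$, and finish with Halmos--von Neumann --- so your approach coincides with the paper's (and with \cite{BFG}'s). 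The one substantive obstacle you correctly flag, namely establishing $\vol(\partial\vO)=0$ for the $20$ fractal-window cases, is not touched in the present paper at all; it is simply inherited from \cite{BFG}, where a Rauzy-fractal/density argument of the type you sketch is carried out.
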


The analysis of the 48 cases in \cite{BFG} has led to dividing them
into two different types based on the shape of their windows. The
first one consists of all DPVs with polygonal window (Figures
\ref{fig:squares} and \ref{fig:polygons}), whereas the second one of
all DPVs with fractal-like window (Figure \ref{fig:fractals}). Note
that all resulting tilings are MLD with regular model sets for the
same lattice $\cL$, but with different windows. We shall discuss the
relation later.

\begin{figure}[t]
  \centerline{\includegraphics[width=0.66\textwidth]{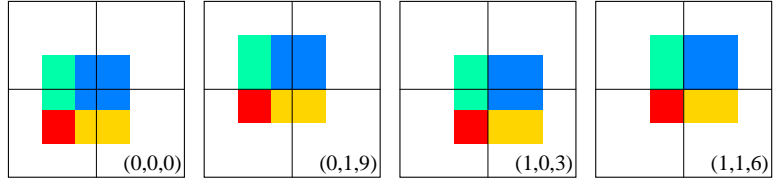}}
  \caption{Four square windows corresponding to different DP
    tilings. The underlying inflation rules form a~single orbit under
    the action of the group $D_4$, and the resulting tilings belong to
    one MLD class, because the corresponding tilings simply are
    translates of one another, hence related by a local derivation
    rule.}\label{fig:squares}
\end{figure}

\begin{figure}[t]
  \centerline{\includegraphics[width=0.99\textwidth]{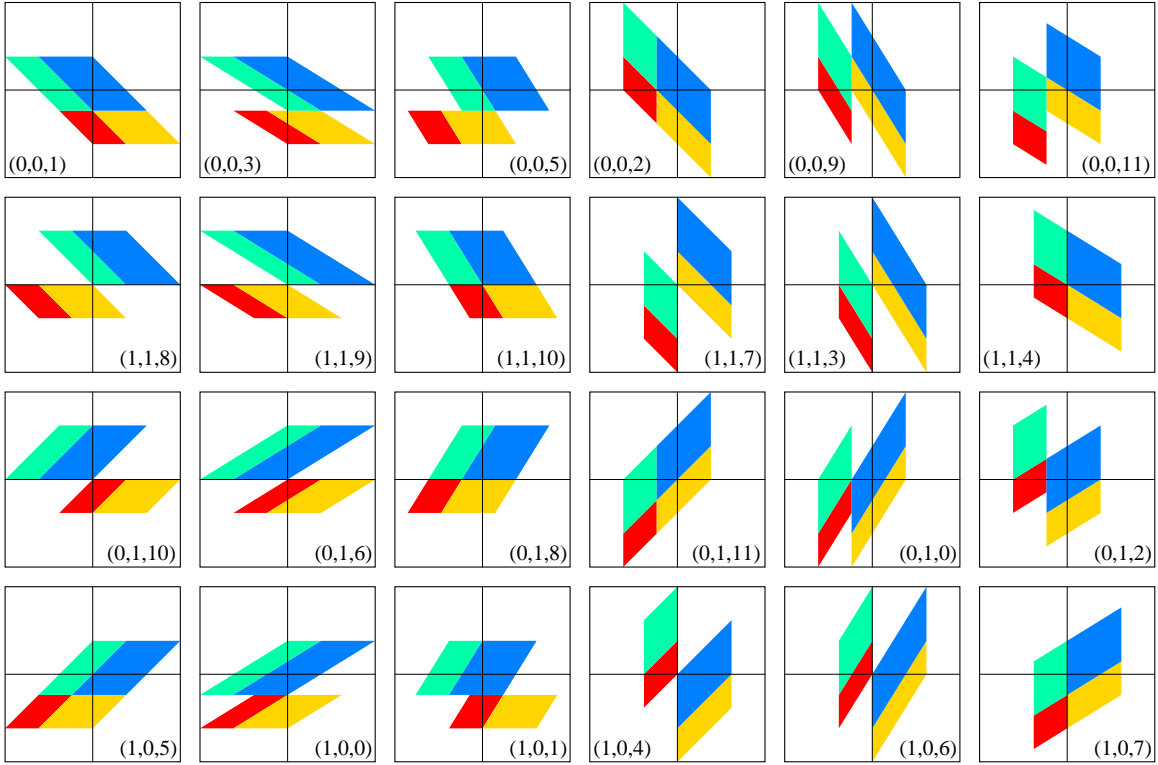}}
  \caption{Further 24 polygonal windows corresponding to different DPV
    tilings. One can recognise 3 orbits of inflation rules under the
    action of the group $D_4$, namely those of the elements $(0,0,1)$
    (first and fourth column), $(0,0,3)$ (second and fifth column) and
    $(0,0,5)$ (third and sixth column). Moreover, the 12 obvious pairs
    of windows (with equal slope) representing 12 different MLD
    classes can be recognised.}
  \label{fig:polygons}
\end{figure}

The polygonal windows can be further divided into two main classes.
One, with the square windows, corresponds to the four possible direct
product tilings (Figure \ref{fig:squares}) and lie in the same LI
class \cite{BFG}. The remaining class with 24 cases, as depicted in
Figure~\ref{fig:polygons}, can be further understood as three orbits
under the action of the dihedral group $D_4$. This action is clearly
recognisable at the level of inflation rules, but becomes less obvious
at the level of windows. One would expect that the action of the group
$D_4$ in the direct space will have its counterpart in the internal
space -- a parallelogram is mapped to another parallelogram under the
action. This is not true, as can be seen by comparing cases $(0,0,1)$
and $(1,1,8)$. These tilings are related by a~rotation through $\pi$
while the windows need some additional rearrangement. The origin for
this is our choice of the control points in each tile, which is not
invariant under the action of $D_4$. While the point set obtained from
the tiling $(0,0,1)$ via a rotation through $\pi$ is not the one given
by the control points of tiling $(1,1,8)$, it is MLD with this point
set. This can be undone by applying a local rule (changing the control
points in each type of tile).  These local rules consist of a set of
translations $t^{}_i \in \ZZ[\tau]^{2}$ of $\vL^{}_i$, which result in
a~set of translation $t^{\star}_i \in \ZZ[\tau]^{2}$ of $\vO^{}_i$ in
the internal space.

\begin{figure}[t]
  \centerline{\includegraphics[width=0.85\textwidth]{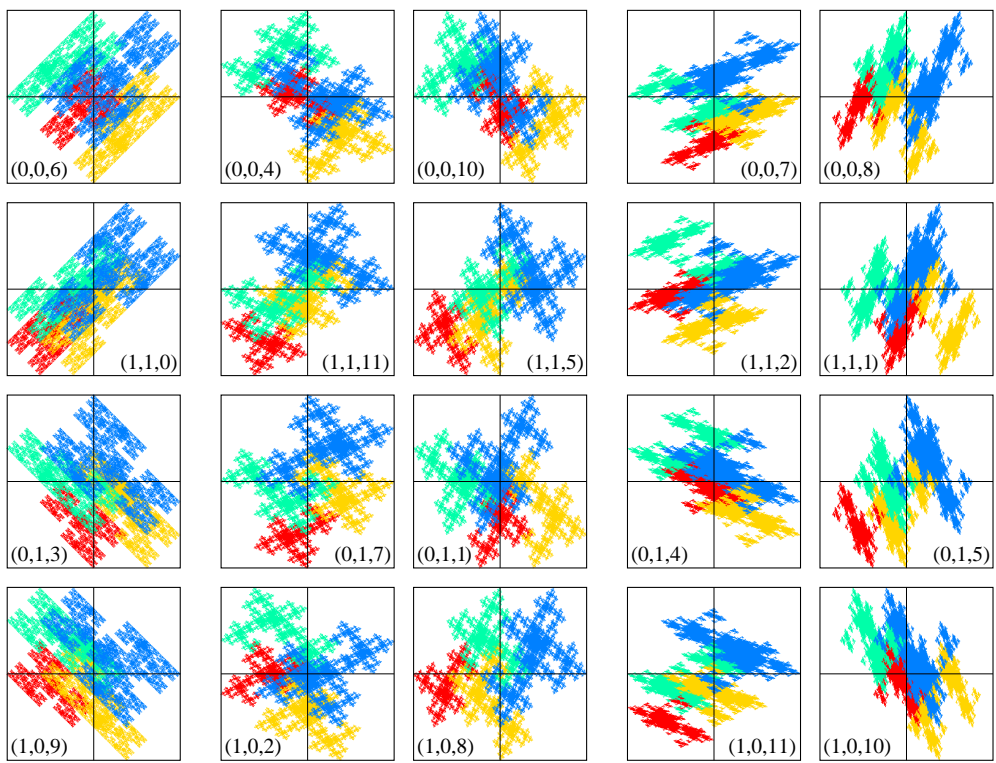}}
  \caption{The 20 fractal windows corresponding to different DPV
    tilings. The first column -- ``castle" -- corresponds to an orbit
    of rule $(0,0,6)$ under the action of $D_4$. Since the
    inflation rule has a mirror symmetry, the resulting orbit consist
    of four elements. The second and third column display an orbit of
    rule $(0,0,4)$ -- ``cross". The last two columns correspond to
    the orbit of rule $(0,0,7)$ -- type ``island".}
  \label{fig:fractals}
\end{figure}

Based on the general MLD criterion, see \cite[Rem.~7.6]{TAO} or
\cite{BSJ}, one can further divide the 24 tilings with sheared
parallelogram windows into 12 MLD classes. Two tilings in each class
are related by a rotation through $\pi$, which is a product of two
reflections, namely across coordinate axes. As mentioned above, the
corresponding windows are also related by this rotation (after a
change of the control points). This shows that the two tilings in each
MLD class are elements of \emph{the same tiling hull}. This hull then
possesses rotational symmetry.

Since there are six different shearing angles $\varphi$, namely 
\begin{equation}\label{eq:angles}
  \varphi \in \left\{ \pm \myfrac{\pi}{4},  \,
    \pm \arctan(\tau), \,  \pm \arctan\left(\tau^{-1}\right) \right\},
\end{equation}
in two possible directions (along the $x$-axis, and the $y$-axis
respectively), the MLD classification based on the slope of the
windows is clear.

\section{Relations via topological conjugacy}

Two tilings that are MLD clearly define two dynamical systems that are
topologically conjugate. The converse, however, is not true in general
because we are not in a symbolic setting. In particular, there is no
analogue of the Curtis--Hedlund--Lyndon theorem. In fact, local
derivability is the concept replacing it, but now only representing
\emph{one} possibility of topological conjugacy. Therefore, some DPV
tilings that are \emph{not} MLD could still give topologically
conjugate dynamical systems. This is indeed the case, as we first
demonstrate with an example.

Let us focus on the tiling with rule $(0,0,1)$. We will show that the
dynamical system defined by this tiling is topologically conjugate
with a DP tiling. This result should not be surprising. It follows
from the general MLD criterion that two tilings are MLD if and only if
one can obtain one window from the other by using just a \emph{finite}
number of Boolean operations (intersection, union, complement) and
translations by elements from $\pi_{\!_\perp}(\cL) =
\ZZ[\tau]^{2}$. Having a~DP tiling (with a square window), one can use
it to approximate tiling $(0,0,1)$ with increasing precision. The
general MLD criterion ensures that all the approximating tilings are
MLD. If the vertices of the sheared window belong to
$\pi_{\!_\perp}(\cL) = \ZZ[\tau]^2$, the consideration above suggests
that the tiling with the sheared window may no longer be MLD with the
DP tiling, but remains topologically conjugate to it; see
Figure~\ref{fig:approx} for an illustration.

\begin{figure}[t]
  \centering
   \centerline{\includegraphics[width=0.84\textwidth]{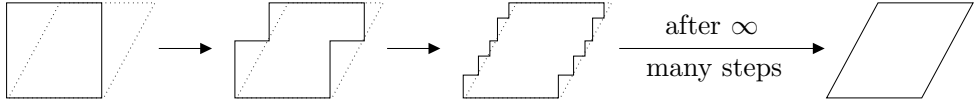}}
    \caption{Approximation of a parallelogram using a square and
      finitely many Boolean operations. This suggests topological
      conjugacy under some additional constraints on the vertex points
      of the parallelogram.}
    \label{fig:approx}
\end{figure}
  
If two tilings are MLD, there exists a~set of local derivation rules
in both directions (meaning that, from the knowledge of one tiling on
a~uniformly bounded neighbourhood of any point, one can construct the
other tiling at this point). As approximants approach the tiling with
the sheared window, the diameter of the required neighbourhood grows
without bound and, in the limit, one needs to know the whole tiling in
order to construct the sheared tiling at any given place. Thus the
locality is broken. On the other hand, it is natural to ask whether
there is some weaker relation between the tiling with sheared window
and the DP tiling. The answer is affirmative, and the relation is
\emph{topological conjugacy}, as we demonstrate next.

For the tiling with rule $(0,0,1)$, one can use a different inflation
rule (which is no longer a~stone inflation) that defines the same
tiling. This (crucial) step relies on the fact that the original
tiling has a \emph{striped structure}, where each ``row" is nothing
but a 1D Fibonacci tiling. Following the standard procedure described
for example in \cite{Dirk}, or by solving the following iterated IFS,
respectively the adjoint IFS to the modified rule\footnote{Taking the
  adjoint IFS from the tiling $(0,0,1)$ directly results in the set of
  square and rectangle prototiles.}, one obtains a set of new
prototiles~$P^{}_i$ that satisfy
\begin{equation}
\begin{split}
  P^{}_{0} &\,=\, \tau^{-1} P^{}_{3} \ts ,\\
  P^{}_{1} &\,=\, \tau^{-1}  P^{}_{2} + \twovec{1}{0} \,\cup\,
                   \tau^{-1}  P^{}_{3} \ts ,\\
  P^{}_{2} &\,=\, \tau^{-1}  P^{}_{1} + \twovec{-1}{1} \,\cup\,
                   \tau^{-1}  P^{}_{3} \ts ,\\
                   P^{}_{3} &\,=\, \tau^{-1}  P^{}_{0}+\twovec{0}{1}
                   \,\cup\, \tau^{-1}  P^{}_{1} +\twovec{-1}{1}
                   \,\cup\,\tau^{-1}  P^{}_{2}+\twovec{1}{0}
                   \,\cup\, \tau^{-1}  P^{}_{3}\ts .
\end{split}
\end{equation}
They turn the rearrangement of rule $(0,0,1)$ into a stone inflation
that is MLD with the rearranged $(0,0,1)$ rule, and thus with the
$(0,0,1)$ tiling itself; see Figure \ref{fig:shearing} for an
illustration. Note that the new tiles $P^{}_i$ are related to the
original tiles $T^{}_i$ by the shearing matrix
$S = \left(\begin{smallmatrix} 1 & -1 \\0 &
    1 \end{smallmatrix}\right)$. For obvious reasons, we call this
tiling \emph{a sheared DP tiling}. So, the $(0,0,1)$ rearrangement is
MLD with a sheared $(0,0,0)$ tiling, but \emph{not} with $(0,0,0)$
itself. However, $(0,0,1)$ and $(0,0,0)$ can give rise to
topologically conjugate systems, as we show below.

\begin{figure}
  \centering
   \centerline{\includegraphics[width=0.84\textwidth]{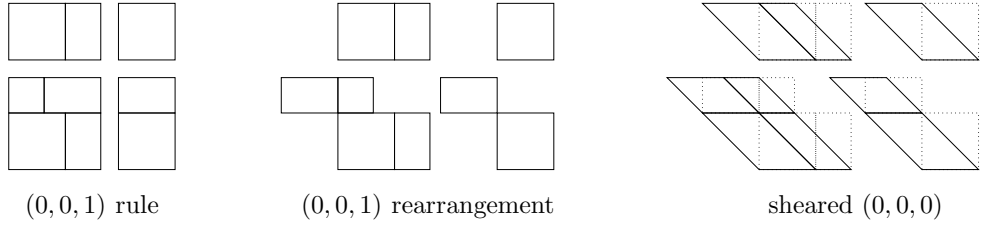}}
    \caption{The stone inflation rule $(0,0,1)$ and the rule in the
      middle produce the same tiling. The rearranged rule $(0,0,1)$
      and the sheared $(0,0,0)$ rule define two tilings that are
      MLD. We show that the sheared $(0,0,0)$ tiling and the DP tiling
      $(0,0,0)$ are topologically conjugate and thus prove the
      topological conjugacy between the tiling spaces for $(0,0,0)$
      and $(0,0,1)$.}
    \label{fig:shearing}
\end{figure}

If one denotes by $\vL_\bullet$ the chosen fixed point of the DP
tiling $(0,0,0)$ and by $\vS_\bullet$ the matching fixed point of the
sheared tiling, there is a clear correspondence via
$\vS_\bullet = S\vL_\bullet$. The procedure described above can be
applied to all 24 tilings with the polygonal window from
Figure~\ref{fig:polygons}.

\begin{fact}
  In the case of Fibonacci DPVs, each tiling with a polygonal window
  as in Figure~$\ref{fig:polygons}$ is MLD to a sheared DP tiling with
  a shearing matrix\/ $S$ of the form
\begin{equation}
\begin{pmatrix}
1 & \alpha \\ 0&1
\end{pmatrix}\ \mbox{or} \ \begin{pmatrix}
1 & 0 \\ \alpha & 1
\end{pmatrix}, \quad \mbox{each with } \alpha \in 
\left\{ \pm 1,\ \pm \tau,\ \pm\tau^{-1} \right\}, 
\label{eq:sh_mat}
\end{equation}
which corresponds to the angles from Eq.~\eqref{eq:angles}. \qed
\end{fact}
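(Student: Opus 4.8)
The plan is to reduce the $24$ cases to three representatives by exploiting the $D_4$ symmetry recognised in Figure~\ref{fig:polygons}, to treat each representative by the \emph{striped-structure} argument already carried out for $(0,0,1)$ in Section~4, and then to propagate the result around each $D_4$ orbit while tracking how the shearing matrix transforms under conjugation. First I would record that the $24$ polygonal-window tilings split into the three orbits of $(0,0,1)$, $(0,0,3)$ and $(0,0,5)$ under $D_4$, so it suffices to exhibit, for each of these three rules, a shearing matrix $S$ of the required form together with an MLD relation to the corresponding sheared DP tiling $\vS_\bullet = S\vL_\bullet$.

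For the three base cases I would repeat the construction of Section~4 essentially verbatim. Each of $(0,0,1)$, $(0,0,3)$, $(0,0,5)$ carries a striped structure -- every ``row'' (or ``column'') being a $1$D Fibonacci tiling -- so that a single shear straightens the parallelogram window back into the square window of the DP tiling $(0,0,0)$. As for $(0,0,1)$, one passes to the rearranged (non-stone) rule, solves its adjoint IFS to obtain a genuine stone inflation with prototiles $P^{}_i = S\,T^{}_i$, and checks that this inflation is MLD with the original DPV tiling; the matrix $S$ is then read off from the slope of the window. The computational heart, and the main obstacle, is to verify that these three slopes are exactly $\arctan(1)$, $\arctan(\tau)$ and $\arctan(\tau^{-1})$, equivalently that the three base shears are $\left(\begin{smallmatrix} 1 & \alpha \\ 0 & 1\end{smallmatrix}\right)$ with $\alpha \in \{-1,-\tau,-\tau^{-1}\}$ up to sign. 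This is where one must actually use the specific decompositions $i^{}_3 \in \{1,3,5\}$ of Figure~\ref{fig:scrambled}; the remaining check that the adjoint IFS closes up into a stone inflation is routine, exactly as for $(0,0,1)$.

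It then remains to spread each base case across its $D_4$ orbit. Since the DP hull is $D_4$-invariant, for any $g\in D_4$ the tiling $g\vL_\bullet$ is again a DP tiling, and applying $g$ to the MLD relation shows that $g\cdot(0,0,k)$ is MLD to $g\,S\,\vL_\bullet = (gSg^{-1})(g\vL_\bullet)$, i.e.\ to the DP tiling sheared by the conjugate $gSg^{-1}$, because MLD is preserved under a global invertible linear map. A direct computation with the generators $R = \left(\begin{smallmatrix} 0 & -1 \\ 1 & 0\end{smallmatrix}\right)$ and $F = \left(\begin{smallmatrix} 1 & 0 \\ 0 & -1\end{smallmatrix}\right)$ of $D_4$ gives $R\left(\begin{smallmatrix} 1 & \alpha \\ 0 & 1\end{smallmatrix}\right)R^{-1} = \left(\begin{smallmatrix} 1 & 0 \\ -\alpha & 1\end{smallmatrix}\right)$ and $F\left(\begin{smallmatrix} 1 & \alpha \\ 0 & 1\end{smallmatrix}\right)F = \left(\begin{smallmatrix} 1 & -\alpha \\ 0 & 1\end{smallmatrix}\right)$, so that conjugation turns a horizontal shear into a vertical one and flips the sign of $\alpha$. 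As $-I\in D_4$ acts trivially by conjugation, each orbit yields exactly the four matrices $\left(\begin{smallmatrix} 1 & \pm\alpha \\ 0 & 1\end{smallmatrix}\right)$ and $\left(\begin{smallmatrix} 1 & 0 \\ \pm\alpha & 1\end{smallmatrix}\right)$. Running this over the three base values $\alpha\in\{1,\tau,\tau^{-1}\}$ produces precisely the $12$ matrices of \eqref{eq:sh_mat}, one per MLD class, whose shearing angles $\arctan(\alpha)$ are exactly those of \eqref{eq:angles}.
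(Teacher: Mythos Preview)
The paper does not actually prove this Fact: it is stated with a \qed\ immediately after a one-line remark (``The procedure described above can be applied to all 24 tilings with the polygonal window from Figure~\ref{fig:polygons}''), so the intended argument is simply a case-by-case repetition of the $(0,0,1)$ analysis for each of the $24$ rules. Your proposal is therefore more detailed than what the paper provides, and it takes a genuinely different route.

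Your reduction via $D_4$ is sound and is a cleaner organisation than the brute-force repetition implicit in the paper. The key points --- that $g$ applied to a fixed point of rule $r$ yields a tiling in the hull of rule $g\!\cdot\! r$, that MLD is preserved under a global invertible linear map, and that the DP hull is $D_4$-invariant --- are all correct, and your conjugation computations are right, yielding precisely the $12$ shear matrices, two tilings per matrix, matching the $12$ MLD classes noted in Figure~\ref{fig:polygons}. The paper itself warns (just after Figure~\ref{fig:polygons}) that the $D_4$ action on rules does not transport directly to windows because the choice of control points is not $D_4$-equivariant; this does not affect your argument, since you work at the level of tilings and hulls rather than windows, but it would be worth a sentence acknowledging why that obstruction is irrelevant here. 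What your approach does not eliminate is the need to verify the striped structure and the resulting slope for the two new base cases $(0,0,3)$ and $(0,0,5)$; you flag this honestly, and it is exactly the residual computation that the paper also leaves implicit.
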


The shearing map $S$ acting in direct space has its counterpart in
internal space, which we call $S^{\star}$. The entries of this matrix
are the images of the corresponding entries of $S$ under the field
automorphism $'$. Since $\tau$ is a unit in $\ZZ[\tau]$, the following
observations hold.

\begin{fact}
  For the lattice\/ $\cL$, one has\/
  $\left(S \oplus S^{\star}\right) \cL = \cL $. Consequently,
  $S\vL_\bullet$ is a regular model set for the lattice\/ $\cL$, with
  a subset of\/ $S^{\star}\nts\vO$ of full measure as its window.  \qed
\end{fact}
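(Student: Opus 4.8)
The plan is to prove the lattice invariance first, as a purely algebraic fact, and then transport the cut and project description of $\vL_\bullet$ along the map $S \oplus S^{\star}$. The starting observation is that $S$ lies in $GL_2(\ZZ[\tau])$: for every admissible off-diagonal entry $\alpha \in \{\pm 1, \pm\tau, \pm\tau^{-1}\}$ both $\alpha$ and $-\alpha$ belong to $\ZZ[\tau]$, because $\tau$ is a unit with $\tau^{-1} = \tau-1$. Hence $S$ and its inverse $\left(\begin{smallmatrix} 1 & -\alpha \\ 0 & 1\end{smallmatrix}\right)$ (and likewise for the lower-triangular form) have entries in $\ZZ[\tau]$ and determinant $1$, so $S$ restricts to a $\ZZ$-module automorphism of $L = \ZZ[\tau]^2$. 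The same holds for $S^{\star}$, whose off-diagonal entry $\alpha'$ again runs through $\{\pm 1,\pm\sigma,\pm\sigma^{-1}\}\subset\ZZ[\tau]$, since $\sigma = -\tau^{-1}$ and $\sigma^{-1}=-\tau$.

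Next I would check that the two actions are intertwined by the star map. Writing a direct-space vector as $v = \twovec{v_1}{v_2}\in\ZZ[\tau]^2$, its internal image is $v^{\star} = \twovec{v_1'}{v_2'}$, and by the Minkowski embedding \eqref{eq:lattice} a point of $\cL$ is exactly the pair $(v,v^{\star})$. As $'$ is a field automorphism of $\QQ(\tau)$, it is additive and multiplicative, so applying the shear and then conjugating coordinatewise agrees with conjugating first and applying the conjugated shear, i.e.\ $(Sv)^{\star} = S^{\star} v^{\star}$. Therefore
\[
  (S \oplus S^{\star}) (v,v^{\star}) \,=\, (Sv, S^{\star} v^{\star})
  \,=\, (Sv, (Sv)^{\star}) \ts ,
\]
which is again the Minkowski image of $Sv\in\ZZ[\tau]^2$, hence a point of $\cL$. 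This gives $(S \oplus S^{\star})\cL \subseteq \cL$, and applying the identical argument to $S^{-1}$, which is a shear of the same admissible form, yields the reverse inclusion, so $(S \oplus S^{\star})\cL = \cL$.

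For the consequence I would push the defining formula for $\vL_\bullet$ through this lattice automorphism. Up to the boundary subtlety already recorded in $\vL \subsetneq \oplam(\vO)$, we have $\vL_\bullet = \{\pi(x) : x\in\cL,\ \pi_{\!_\perp}(x)\in\vO\}$. Setting $y = (S\oplus S^{\star})x$, which ranges over all of $\cL$ as $x$ does, and using $\pi(y) = S\pi(x)$ together with $\pi_{\!_\perp}(y) = S^{\star}\pi_{\!_\perp}(x)$, the membership $\pi_{\!_\perp}(x)\in\vO$ becomes $\pi_{\!_\perp}(y)\in S^{\star}\vO$, whence
\[
  S\vL_\bullet \,=\, \{\pi(y) : y\in\cL,\ \pi_{\!_\perp}(y)\in S^{\star}\vO\}\ts .
\]
Thus $S\vL_\bullet$ is a model set for $\cL$ with window $S^{\star}\vO$. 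Regularity is inherited because $S^{\star}$ is a linear homeomorphism of internal space: it maps interior to interior and boundary to boundary, preserves the property $\overline{\vO^{\circ}}=\vO$, and carries the null set $\partial\vO$ to the null set $\partial(S^{\star}\vO)$.

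The step that deserves care rather than presenting real difficulty is the interplay of the two copies of the shear: that the internal-space matrix must be $S^{\star}$, carrying the \emph{conjugated} entry $\alpha'$ rather than $\alpha$, is precisely what makes $\cL$ invariant, and this rests on $\tau$ being a unit so that $\alpha,\alpha'\in\ZZ[\tau]$. The only remaining bookkeeping concerns the ``subset of full measure'' clause: exactly as for $\vL$ itself, the precise window (which boundary points are actually occupied) is a subset of $S^{\star}\vO$ differing from it on a set of measure $0$, and the bijection $S^{\star}$ transports this null discrepancy unchanged, so I expect no genuine obstacle beyond tracking the half-open boundaries.
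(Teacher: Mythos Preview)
Your argument is correct and is precisely the intended one: the paper offers no proof beyond the preceding remark that $\tau$ is a unit in $\ZZ[\tau]$, and your expansion of this---$S\in GL_2(\ZZ[\tau])$, the intertwining $(Sv)^{\star}=S^{\star}v^{\star}$ via the field automorphism, and transport of the cut and project description through the resulting lattice automorphism---is exactly what that remark is meant to invoke. There is nothing to add.
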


Note that the window in this statement contains the entire interior of
$S^{\star}\nts\vO$, but only part of its boundary. Since $S$ is an invertible
linear mapping, we also get the following result.

\begin{fact}
  The mapping\/ $S$ is a bijection on\/ $\RR^2$, and\/ $S^{\star}$ is
  a bijection on\/ $L^{\star}$. In particular, if\/ $\vS = S\vL$,
  then\/ $0 \in \vL$ if and only if\/ $0 \in \vS$. Moreover, $\vL$ is
  a generic model set if and only if\/ $\vS$ is a generic model set,
  i.e., $\partial \vO \cap L^{\star} = \varnothing$ if and only if\/
  $\partial (S^{\star}\vO) \cap L^{\star} = \varnothing$.  \qed
\end{fact}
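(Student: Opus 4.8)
The plan is to treat the four assertions in turn, as each reduces to elementary linear algebra once the integrality of $S^{\star}$ over $\ZZ[\tau]$ is established. First, every admissible $S$ from Eq.~\eqref{eq:sh_mat} is triangular with unit diagonal, so $\det S = 1$ and $S$ is a linear bijection of $\RR^2$; this also supplies the homeomorphism property needed below. For $S^{\star}$, I would apply the automorphism $'$ to the entries of $S$, obtaining again a triangular matrix with unit diagonal whose off-diagonal entry is $\alpha'$. Since $\tau$ is a unit in $\ZZ[\tau]$, one has $\alpha' \in \{\pm 1, \pm\sigma, \pm\tau\} \subset \ZZ[\tau]$ for each admissible $\alpha$, and the inverse shear $(S^{\star})^{-1}$ (the same matrix with $\alpha'$ negated) is likewise integral over $\ZZ[\tau]$. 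Hence both $S^{\star}$ and $(S^{\star})^{-1}$ map $L^{\star} = \ZZ[\tau]^2$ into itself, so $S^{\star}$ restricts to a bijection of $L^{\star}$. Alternatively, this is immediate from the preceding Fact: the identity $(S \oplus S^{\star})\cL = \cL$ projects under $\pi_{\!_\perp}$ to $S^{\star} L^{\star} = L^{\star}$, because $\pi_{\!_\perp}$ intertwines the block-diagonal action with $S^{\star}$.

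The statement about the origin is then immediate. As $S$ is linear with $S(0) = 0$ and injective, $0 \in \vS = S\vL$ holds precisely when $0 = S(x)$ for some $x \in \vL$, which forces $x = 0$; thus $0 \in \vS$ if and only if $0 \in \vL$.

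For the generic-model-set equivalence, the key point is that $S^{\star}$ is not merely a set bijection of $L^{\star}$ but a homeomorphism of $\RR^2$, so it commutes with the boundary operator, giving $\partial(S^{\star}\vO) = S^{\star}(\partial\vO)$. Combining this with the fact that $S^{\star}$ is a bijection of $L^{\star}$, I would argue that a point $y \in L^{\star}$ lies in $\partial(S^{\star}\vO)$ exactly when its preimage $(S^{\star})^{-1}(y)$, which again lies in $L^{\star}$, lies in $\partial\vO$. Consequently, $(S^{\star})^{-1}$ restricts to a bijection between $\partial(S^{\star}\vO) \cap L^{\star}$ and $\partial\vO \cap L^{\star}$, so one set is empty if and only if the other is. This is precisely the asserted equivalence $\partial\vO \cap L^{\star} = \varnothing \Leftrightarrow \partial(S^{\star}\vO) \cap L^{\star} = \varnothing$, i.e.\ $\vL$ is a generic model set if and only if $\vS$ is.

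There is no serious obstacle here; the only point requiring care is the \emph{double} role played by $S^{\star}$. Being a homeomorphism of $\RR^2$ is what lets boundaries be transported, via $\partial(S^{\star}\vO) = S^{\star}(\partial\vO)$, while being a bijection of the discrete set $L^{\star}$ is what guarantees that lattice points on one boundary correspond bijectively to those on the other. Both properties are invoked simultaneously in the last paragraph, and both ultimately rest on $\tau$ being a unit in $\ZZ[\tau]$, which keeps $S^{\star}$ and $(S^{\star})^{-1}$ integral over $\ZZ[\tau]$ and hence lattice-preserving.
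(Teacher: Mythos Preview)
Your proof is correct and matches the paper's reasoning, which leaves this Fact without explicit proof (it is stated with an immediate \qed\ after the remark that $S$ is an invertible linear map and that $\tau$ is a unit in $\ZZ[\tau]$). You have simply spelled out the details the paper suppresses: integrality of $S^{\star}$ and $(S^{\star})^{-1}$ over $\ZZ[\tau]$, the transport of boundaries under the homeomorphism $S^{\star}$, and the resulting bijection between $\partial\vO \cap L^{\star}$ and $\partial(S^{\star}\vO) \cap L^{\star}$.
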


In order to proceed, one has to define dynamical systems induced by
the tilings $\vL_\bullet$ and $\vS_\bullet$. As usual, one
defines the geometric hulls
\begin{equation}
   \XX \,=\, \overline{\big\{\vL_\bullet +t : t \in \RR^2 \big\}}^{\mathrm{LT}} 
   \quad \mbox{and} \quad
   \YY \,=\, \overline{\big\{\vS_\bullet +t : t \in \RR^2 \big\}}^{\mathrm{LT}},
\end{equation}
where the closure is taken in the \emph{local topology}, which is
metrisable \cite{Boris}. In such a metric $\dd$, two tilings are close
if they agree on a large ball centred at the origin, possibly after
a~small global translation of one of them. Formally, two tilings
$\vL$ and $\widetilde{\vL}$ are $\varepsilon$-close,
$\dd (\vL, \widetilde{\vL}) < \varepsilon$, if
\[ 
  \vL \,\cap\, \overline{B_{\sfrac{1}{\varepsilon}}(0)} \, = \,
  (\widetilde{\vL} +t) \,\cap\, \overline{B_{\sfrac{1}{\varepsilon}}(0)}
  \quad \mbox{for some }  t\in B_{\varepsilon}(0) \ts . 
\]
Note that the metric $\dd$ is not translation invariant. 

Both hulls are equipped with the action of $\RR^2$ via
translation. This turns $\XX$ and $\YY$ into a~pair of topological
dynamical systems, $(\XX,\RR^2)$ and $(\YY,\RR^2)$. Since the defining
inflation rules are both primitive, the resulting tilings are
repetitive which is equivalent to the statement that the dynamical
hulls $\XX$ and $\YY$ are minimal. Our aim is to prove that these
dynamical systems are topologically conjugate, which is to say that
there exists a homeomorphism $\phi \colon \XX \longrightarrow \YY$
which preserves the action $g^{}_{t}$ of the group $\RR^2$ via
translation.  Then, $\phi$ is a \emph{topological conjugacy} (in the
strict sense) if $g^{}_t \circ \phi = \phi \circ g^{}_t$ holds for all
$t\in\RR^2$. In other words, the diagram
\begin{equation}\label{eq:diagram}
\begin{CD}
\XX @>g^{}_t>> \XX\\
@V{\phi}VV @VV{\phi}V\\
\YY @>g^{}_t>> \YY
\end{CD} 
\end{equation}
is commutative.

The strategy from here goes as follows. It is not difficult to see
that all patterns with polygonal windows are MLD with a sheared DP
tiling. In a first step, in each row (or column) of the tiling, all
tiles are replaced by equally sheared versions, and one then observes
that this tiling with sheared tiles is actually MLD to a sheared DP
tiling, which in turn is combinatorially equivalent to the plain DP
tiling. It is important here that the applied shear preserves the
projected lattice $\pi(\cL)$, and thus lifts to an automorphism of
$\cL$. While this shear of the DP tiling is \emph{not} an MLD
operation, it is still a topological conjugacy of the underlying
dynamical system. To show this, we select a suitable pair of tilings
and construct a bijection between the respective translation orbits
that commutes with the translation action. Then, invoking the common
parametrisation from the projection method, we show via a somewhat
subtle limit argument that this bijection between the two orbits
extends to a homeomorphism of the hulls with the same properties.

To proceed, we need to recall several results. First, since
$\vL_\bullet$ is a regular model set that arises from the cut and
project scheme $(\RR^2,\, \RR^2,\, \cL)$ with $\cL$ defined in
\eqref{eq:lattice}, there exists an $\RR^2$-invariant surjective
continuous mapping $\beta_{\XX}$, called \emph{the torus
  parametrisation}, which maps the hull $\XX$ onto the \emph{torus}
$\TT(\vL_\bullet)$. This torus is defined for our set $\vL_\bullet$ as
a~factor group, $\TT(\vL_\bullet)\defeq (\RR^2 \times \RR^2) /
\cL$. The mapping $\beta_{\XX}$ is, under our assumptions, 1--1 almost
everywhere, but not everywhere; see \cite[Cor.~5.2]{Moody_Strungaru}
or \cite{BLM} for a detailed exposition.

As already mentioned above, the cut and project scheme for set
$\vS_\bullet$ is \emph{exactly} the same as the one for the set
$\vL_\bullet$. Therefore, we can employ the same torus. This fact is
useful for our further discussion. We provide this connection by
choosing an appropriate torus parametrisation for the two hulls. In
particular, both tilings $\vL_\bullet$ and $\vS_\bullet$ are mapped to
$0$. Note that, since $\vL_\bullet$ is a singular element of $\XX$, we
have
\[ 
   \vL_{\bullet} \, \subsetneq \, \oplam (\vO) \, = \,
   \{x \in \ZZ[\tau]^2 \ : x^{\star} \in \vO \}. 
\]
From the minimality of the hulls $\XX$ and $\YY$ and from the choice
of the torus parametrisation, it follows that one can now choose
\emph{generic} elements
\begin{equation}\label{eq:def}
    \vL^{}_{\circ} \in \XX \quad \mbox{and}\quad \vS^{}_{\circ}  \in \YY
\end{equation}
such that they are mapped to the same point on the torus,
\[
  \beta_{\XX}(\vL^{}_{\circ} ) \, = \,
  \beta_{\YY}(\vS^{}_{\circ} ) \, = \, \vartheta \, \neq \, 0 \ts .
\] 
The point $\vartheta \in \RR^4$ is fixed and chosen so that
$\vL^{}_{\circ} = \oplam (\vO_{\vartheta})$, with
$ \vO_{\vartheta} \defeq \vO + \pi_{\!_\perp}\!(\vartheta)$.
Moreover, the minimality implies that
$\XX = \overline{\vL^{}_{\circ} +\RR^2\ts }^{\mathrm{LT}}$ and
$\YY = \overline{\vS^{}_{\circ} +\RR^2\ts }^{\mathrm{LT}}$.

Now, we are in the position to construct the homeomorphism $\phi$
explicitly and show that it has the properties of a topological
conjugacy. The first step consists in defining the orbit mapping
\[
  \phi \colon \{\vL^{}_{\circ}  +t : t \in \RR^2 \}
  \, \longrightarrow \, \{\vS^{}_{\circ}  +t : t \in \RR^2 \}
\]
as follows: 
\begin{enumerate}\itemsep=2pt
 \item $\phi(\vL^{}_{\circ} ) = \vS^{}_{\circ} $,
 \item $\phi(\vL^{}_{\circ} + t ) = \vS^{}_{\circ} + t$ for every
      $t \in \RR^2$.
\end{enumerate}
It is clear by construction that $\phi$ is invertible on this orbit,
and that it satisfies the required commutation property,
\begin{equation}\label{eq:com}
  \bigl(g^{}_t \circ \phi\bigr) (\vL^{}_{\circ}  + T)
  \, = \, g^{}_t (\vS^{}_{\circ}  + T)
  \, = \, \vS^{}_{\circ}  + T + t \, = \,
  \phi(\vL^{}_{\circ} +T) + t \, = \,
  \bigl(\phi \circ g^{}_t\bigr) (\vL^{}_{\circ} +T) \ts .
\end{equation} 
Our torus parametrisations ensures that
$\beta_{\YY}(\phi(\vL^{}_{\circ} +t)) = \beta_{\XX}(\vL^{}_{\circ}
+t)$, i.e., the orbit of an element $\vL^{}_{\circ} \in \XX$ has the
same image on the torus as the orbit of $\phi(\vL^{}_{\circ} )$ in the
hull $\YY$.

To establish that $\phi$ has an extension to a continuous bijection
between $\XX$ and $\YY$ is more subtle. Let us first state two results
that hold for an arbitrary, generic model set $\vL$.

\begin{lemma}\label{lem:translations}
  Let\/ $\vL$ be a regular model set, with window
  $\vO^{}_{\!\vL}$. Assume\/ $\vL$ to be generic, so\/
  $\partial \vO^{}_{\!\vL} \cap \vL^{\star} = \varnothing$. Let\/
  $\vL^{}_R = \vL \cap \overline{B^{}_R(0)}$. Then, for all\/ $R>0$,
  there exists some\/ $\delta = \delta(R)$ such that, for all\/
  $t^{\star} \in \RR^2$ with\/ $\| t^{\star}\| < \delta$, one has
\[ 
  \vL_R ^{\star} \,\subset\, \vO^{}_{\!\vL} \cap
  ( \vO^{}_{\!\vL} + t^{\star} ) \ts .
\]
In particular, the claim holds for\/ $\vL = \vL_{\circ}$. 
\end{lemma}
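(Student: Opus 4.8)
The plan is to combine the finiteness of $\vL^{}_R$ with the genericity hypothesis, reducing the statement to an elementary margin estimate in internal space. First I would note that, since $\vL$ is a Delone set and $\overline{B^{}_R(0)}$ is compact, the point set $\vL^{}_R = \vL \cap \overline{B^{}_R(0)}$ is \emph{finite}; hence its star image $\vL_R^{\star}$ is a finite subset of the internal space $\RR^2$. This finiteness is exactly what will later allow me to pass from a pointwise margin to a single uniform $\delta = \delta(R)$.

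The decisive structural input is genericity. Because $\vL$ is a regular model set, it coincides with its associated projection set, $\vL = \oplam(\vO^{}_{\!\vL})$, so $\vL^{\star} \subset \vO^{}_{\!\vL}$; and because $\vL$ is generic, $\partial \vO^{}_{\!\vL} \cap \vL^{\star} = \varnothing$, so no star image of a point of $\vL$ can lie on the boundary of the window. Together these give $\vL^{\star} \subset \vO^{}_{\!\vL} \setminus \partial \vO^{}_{\!\vL} = \vO^{\circ}_{\!\vL}$, and in particular $\vL_R^{\star} \subset \vO^{\circ}_{\!\vL}$: every relevant star image lies in the \emph{open interior} of the window. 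I regard this as the heart of the argument, and as the place where genericity is indispensable: were a point of $\vL_R^{\star}$ allowed to sit on $\partial \vO^{}_{\!\vL}$, an arbitrarily small $t^{\star}$ pointing outward would move it out of $\vO^{}_{\!\vL} + t^{\star}$, and no positive $\delta$ could exist.

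With the interior inclusion in hand, the conclusion follows by a uniform margin argument. For each $y \in \vL_R^{\star}$ I set $d(y) = \dist(y, \partial \vO^{}_{\!\vL})$, which is strictly positive since $y \in \vO^{\circ}_{\!\vL}$, and define $\delta = \min_{y \in \vL_R^{\star}} d(y)$; this minimum is positive precisely because it is taken over a finite set of positive reals, which is where the finiteness of $\vL^{}_R$ enters. Fixing $t^{\star}$ with $\| t^{\star}\| < \delta$ and any $y \in \vL_R^{\star}$, the open ball $B^{}_{\delta}(y)$ is contained in $\vO^{}_{\!\vL}$, so $y - t^{\star} \in \vO^{}_{\!\vL}$, i.e.\ $y \in \vO^{}_{\!\vL} + t^{\star}$; combined with $y \in \vO^{}_{\!\vL}$ this yields $y \in \vO^{}_{\!\vL} \cap (\vO^{}_{\!\vL} + t^{\star})$. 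As $y$ was arbitrary, $\vL_R^{\star} \subset \vO^{}_{\!\vL} \cap (\vO^{}_{\!\vL} + t^{\star})$, as claimed, and the last sentence is immediate on taking $\vL = \vL^{}_{\circ}$, which is generic by its construction in \eqref{eq:def}.

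The only point needing a little care — and the sole (mild) obstacle — is the containment $B^{}_{\delta}(y) \subset \vO^{}_{\!\vL}$. This rests on the fact that a segment from the interior point $y$ to a point outside the closed set $\vO^{}_{\!\vL}$ must cross $\partial \vO^{}_{\!\vL}$, which would contradict $\dist(y, \partial \vO^{}_{\!\vL}) \geq \delta$. It uses nothing about the window beyond its being regular closed, $\overline{\vO^{\circ}_{\!\vL}} = \vO^{}_{\!\vL}$, together with the genericity that placed $y$ in its interior, so it presents no real difficulty.
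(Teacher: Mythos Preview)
Your proof is correct and follows essentially the same approach as the paper: both use local finiteness to get that $\vL_R^{\star}$ is finite, define $\delta$ as the minimum distance of its points to $\partial\vO^{}_{\!\vL}$, invoke genericity to ensure $\delta>0$, and conclude $x^{\star}\in\vO^{}_{\!\vL}+t^{\star}$. Your write-up is more detailed (spelling out why $B^{}_{\delta}(y)\subset\vO^{}_{\!\vL}$), but the argument is the same.
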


\begin{proof}
  Fix $R>0$. Since $\vL$ is uniformly discrete and hence locally
  finite, it follows that $\vL^{}_R $ is finite, so $\vL_R^{\star}$ is
  finite as well. This assumption assures that
\[ 
  \delta \, \defeq \min_{x^{\star} \in \vL_R^{\star}} \dist
  (x^{\star}\!, \partial \vO^{}_{\!\vL} ) 
\]
is well defined. The assumption on regularity of the set $\vL$ results
in $\delta >0$. Then, for all $x^{\star} \in \vL_R ^{\star}$, one gets
$x^{\star} \in \vO^{}_{\!\vL} + t^{\star}$, and we are done.
\end{proof}

\begin{remark}
  Requiring genericity of $\vL$ can often be relaxed by replacing it
  with several constraints on the direction of the vector
  $t^{\star}$. In our case at hand, this can easily be done since the
  shape of the window $\vO$ is a~polygon and thus sufficiently simple.
  \exend
\end{remark}

One immediate consequence of Lemma~\ref{lem:translations} is the
following result.

\begin{prop}\label{prop:basic}
  Let\/ $\vL$ be a regular model set that is generic, so\/
  $\vL^{\star}\, \cap\, \partial \vO^{}_{\!\vL} = \varnothing$.
  Further, let\/ $t \in L = \ZZ[\tau]^{2}$. Then, there are\/
  $\delta^{}_1 = \delta^{}_1(\varepsilon)>0$ and\/
  $\delta^{}_2 = \delta^{}_2(\varepsilon)>0$ such that, if\/ $\vL$
  and\/ $\vL + t$ are\/ $\varepsilon$-close, then\/
  $\| t^{\star} \| < \delta^{}_1$, and, if\/
  $\| t^{\star} \| < \delta^{}_2$, then\/ $\vL$ and\/ $\vL + t$ are\/
  $\varepsilon$-close.
\end{prop}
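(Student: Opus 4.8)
The plan is to translate both implications into statements about windows in internal space and then to invoke Lemma~\ref{lem:translations}. The starting point is the identity $\vL + t = \oplam(\vO^{}_{\!\vL} + t^{\star})$, which holds for $t\in L$ because $(t,t^{\star})\in\cL$. Since $t^{\star}\in L^{\star}$ and $\partial\vO^{}_{\!\vL}\cap L^{\star}=\varnothing$, the translate $\vL+t$ is again a \emph{generic} regular model set, now with window $\vO^{}_{\!\vL}+t^{\star}$. Writing $R=1/\varepsilon$, agreement of the two point sets on $\overline{B^{}_R(0)}$ (possibly after a shift) is then equivalent to a statement about which of the finitely many star-points $x^{\star}$, $x\in L\cap\overline{B^{}_R(0)}$, lie in $\vO^{}_{\!\vL}$ versus in $\vO^{}_{\!\vL}+t^{\star}$.

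For the second implication I would argue directly and obtain the trivial shift. I apply Lemma~\ref{lem:translations} once to $\vL$ and once to the generic model set $\vL+t$ (using the translation $-t$, whose star image $-t^{\star}$ has the same norm). Choosing $\delta^{}_2=\delta^{}_2(\varepsilon)$ to be the smaller of the two resulting radii, the lemma gives $\vL^{\star}_R\subset\vO^{}_{\!\vL}\cap(\vO^{}_{\!\vL}+t^{\star})$ and, symmetrically, $(\vL+t)^{\star}_R\subset(\vO^{}_{\!\vL}+t^{\star})\cap\vO^{}_{\!\vL}$. These two inclusions say precisely that no point of $L\cap\overline{B^{}_R(0)}$ distinguishes the two windows, whence $\vL\cap\overline{B^{}_R(0)}=(\vL+t)\cap\overline{B^{}_R(0)}$. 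This is $\varepsilon$-closeness with the trivial shift $s=0$, so the second implication holds.

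For the first implication I would use the contrapositive together with the equidistribution of star-points. If $\vL$ and $\vL+t$ are $\varepsilon$-close, they agree on $\overline{B^{}_R(0)}$ after a physical shift $s$ with $\|s\|<\varepsilon$. Picking any common point (the model set is relatively dense, so for $\varepsilon$ small the intersection is nonempty) and subtracting two elements of $\vL\subset L$ shows $t+s\in L$, hence $s\in L$, and the comparison reduces to \emph{exact} agreement of $\vL$ with $\vL+u$ on $\overline{B^{}_R(0)}$, where $u=t+s\in L$. Now the star images $\{x^{\star}:x\in L\cap\overline{B^{}_R(0)}\}$ become equidistributed in internal space as $R\to\infty$ (Weyl's theorem, via the regular-model-set property). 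Since $\vO^{}_{\!\vL}$ has nonempty interior and boundary of measure zero, any $u^{\star}$ bounded away from $0$ makes the symmetric difference $\vO^{}_{\!\vL}\,\triangle\,(\vO^{}_{\!\vL}+u^{\star})$ have positive measure, so for $R$ large it must contain a star-point, contradicting the exact agreement. This forces $\|u^{\star}\|<\delta(R)$ with $\delta(R)\to 0$.

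The main obstacle is the final step, passing from the bound on $\|u^{\star}\|$ to a bound on $\|t^{\star}\|$. Because $L=\ZZ[\tau]^{2}$ is dense in $\RR^{2}$, the permissible small physical shift $s$ can carry a large internal image $s^{\star}$, so that $t^{\star}=u^{\star}-s^{\star}$ is \emph{not} automatically small; equivalently, the genuinely translation-invariant quantity is the torus displacement of $(t,0)\equiv(0,-t^{\star})\pmod{\cL}$ rather than $\|t^{\star}\|$ itself. I would resolve this by exploiting genericity together with the polygonal shape of $\vO$ (as anticipated in the Remark following Lemma~\ref{lem:translations}): constraining the direction of the admissible shift shows that the matching $s$ may be taken with $s^{\star}$ negligible, so that the bound on $\|u^{\star}\|$ transfers to the desired $\|t^{\star}\|<\delta^{}_1(\varepsilon)$. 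Making this reduction precise, and checking that $\delta^{}_1(\varepsilon)\to 0$ as $\varepsilon\to 0$, is the delicate point on which the subsequent limit argument for the homeomorphism will rely.
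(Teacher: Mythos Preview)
Your treatment of the second implication is essentially the paper's: both reduce to Lemma~\ref{lem:translations}. You are more careful in applying it twice (once to $\vL$ and once to $\vL+t$) to get both inclusions $\vL^{}_R\subset(\vL+t)$ and $(\vL+t)^{}_R\subset\vL$, whereas the paper simply says ``$\delta^{}_2$ can be chosen as described in Lemma~\ref{lem:translations}''; but the idea is identical.

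For the first implication your route differs substantially from the paper's. The paper does \emph{not} use Weyl equidistribution or a contrapositive argument; it simply writes down
\[
  \delta^{}_1 \,\defeq\, \sup\bigl\{\,\|u\| : u\in\RR^2,\; u + \vL^{\star}_{1/\varepsilon}\subset\vO^{}_{\!\vL}\bigr\},
\]
which is finite because $\vO^{}_{\!\vL}$ is bounded and $\vL^{\star}_{1/\varepsilon}$ is a nonempty finite set. The implicit reasoning is that $\varepsilon$-closeness forces $\vL^{}_{1/\varepsilon}\subset\vL+t$, hence $-t^{\star}+\vL^{\star}_{1/\varepsilon}\subset\vO^{}_{\!\vL}$, so $\|t^{\star}\|\leqslant\delta^{}_1$. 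This one-line definition replaces your entire equidistribution paragraph and yields the same bound more directly.

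The obstacle you raise --- that the allowed small physical shift $s\in B^{}_{\varepsilon}(0)$ may lie in $L$ yet carry a large $s^{\star}$, so that a bound on $\|(t+s)^{\star}\|$ does not transfer to $\|t^{\star}\|$ --- is a genuine subtlety, and your counterexample (take any $t\in L$ with $\|t\|<\varepsilon$ and use $s=-t$) shows the first implication is literally false as stated if one insists on the shift version of the metric. The paper's proof does not address this point at all: its formula for $\delta^{}_1$ tacitly presumes exact coincidence on $\overline{B^{}_{1/\varepsilon}(0)}$, i.e.\ $s=0$. Your proposed fix via the polygonal shape and the Remark following Lemma~\ref{lem:translations} is not what the paper does (that Remark concerns relaxing \emph{genericity}, not controlling the shift), and it is not carried through. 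In short, on $\delta^{}_2$ you agree with the paper; on $\delta^{}_1$ you take a longer road to the same destination; and on the shift issue you have spotted something the paper's proof silently passes over, but your resolution remains a sketch.
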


\begin{proof}
  Denote by $A_\varepsilon$ the set of $\varepsilon$-almost periods of
  $\vL$ relative to $\dd$, so
\[
  A_{\varepsilon} \, = \, \{t : \dd(\vL, \vL+t) < \varepsilon \} \ts .
\]
Then, the statements hold if there exist $\delta^{}_1$
and $\delta^{}_2$ such that
\[
  B^{}_{\delta_2}(0) \, \subseteq \, \overline{A_{\varepsilon}^{\star}}
  \, \subseteq \, B^{}_{\delta_1}(0) \ts .
\]
Clearly, $\delta^{}_2$ can be chosen as described in
Lemma~\ref{lem:translations}. The second bound can be obtained as
$\delta^{}_1 \defeq \sup \{\, \| t \| \, :\, t\in \RR^2, \, t +
\vL^{\star}_{1/\varepsilon} \subset \vO^{}_{\!\vL} \}$.
\end{proof}

Proposition~\ref{prop:basic} can directly be applied to our
situation. Suppose that $\vL^{}_{\circ}$ and $\vL^{}_{\circ} + t$ are
$\varepsilon$-close, so
$\dd(\vL^{}_{\circ},\, \vL^{}_{\circ} + t) < \varepsilon$. This
implies that $\| t^{\star} \| < \delta(\varepsilon)$. Then, for
$S = \left(\begin{smallmatrix} 1 & \alpha \\ 0 &
    1 \end{smallmatrix}\right)$, we get
\[ 
  \big\| ( S^{-1}t )^{\star} \big\| \, = \,
  \big\| t^{\star} - \alpha^{\star} \twovec{t^{\star}_1}{0}
  \big\| \, \leqslant \, \bigl( 1+|\alpha^{\star}| \bigr)
  \| t^{\star} \| \, \leqslant \, \bigl( 1+|\alpha^{\star}| \bigr)
  \delta(\varepsilon) \ts .  
\]
Note that the same estimate holds also for any matrix $S$ of the form
$S = \left(\begin{smallmatrix} 1 & 0 \\ \alpha &
    1 \end{smallmatrix}\right)$.

Proposition~\ref{prop:basic} ensures that there is some
$\widetilde{\varepsilon}\in \cO(\varepsilon)$ such that
$\vL^{}_{\circ}$ and $\vL^{}_{\circ} + S^{-1}t$ are
$\widetilde{\varepsilon}$-close which here is equivalent to the
statement that $\vL^{}_{\circ}$ and $\vL^{}_{\circ} + S^{-1}t$ agree
on $\overline{B_{\sfrac{1}{\widetilde{\varepsilon}}}(0)}$. Applying
$S$ yields that $S\vL^{}_{\circ}$ and
$S\vL^{}_{\circ} + SS^{-1}t = S\vL^{}_{\circ} + t$ agree on
$S\overline{B_{\sfrac{1}{\widetilde{\varepsilon}}}(0)}$. Since the
matrix $S$ is invertible,
$S\overline{B_{\sfrac{1}{\widetilde{\varepsilon}}}(0)}$ is an ellipse
centred at the origin. Therefore, there exists $R>0$ such that
$B_R(0) \subset
S\overline{B_{\sfrac{1}{\widetilde{\varepsilon}}}(0)}$, where the
radius $R$ actually satisfies
\[
  R \, < \, \myfrac{c}{\widetilde{\varepsilon}}
  \quad \mbox{with} \quad c\, = \,
    \myfrac{1}{\sqrt{2}}\sqrt{\alpha^2 -
    \alpha\sqrt{\alpha^2+4 \ts }+2 \ts }.
\]
Note that $c$ is the smaller singular value of $S$. Taking
$\underline{\varepsilon}$ such that
$R \geqslant \frac{1}{\underline{\varepsilon}}$ results in the
following claim.

\begin{prop}\label{prop:close}
  Let\/ $\vL^{}_{\circ}$ be the regular, generic model set from\/
  \eqref{eq:def}, so\/
  $\vL^{\star}_{\circ} \, \cap\, \partial \vO_{\vartheta} =
  \varnothing$. Further, let\/ $t \in \ZZ[\tau]^{2}$. Then, if\/
  $\vL^{}_{\circ}$ and\/ $\vL^{}_{\circ} + t$ are\/ $\varepsilon$-close,
  $\vS^{}_{\circ}$ and\/ $\vS^{}_{\circ} + t$ are
  $\underline{\varepsilon}$-close, with\/
  $\underline{\varepsilon} \in \cO(\varepsilon)$. Formally,
  $\dd(\vL^{}_{\circ},\, \vL^{}_{\circ} + t) < \varepsilon \
  \Rightarrow \ \dd(\vS^{}_{\circ},\, \vS^{}_{\circ} + t) <
  \underline{\varepsilon}\ts .$   \qed
\end{prop}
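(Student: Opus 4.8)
The plan is to transport the closeness relation from the hull $\XX$ to the hull $\YY$ through the shear $S$, using Proposition~\ref{prop:basic} at both ends together with the fact that $S \oplus S^{\star}$ is an automorphism of $\cL$ whose internal part $S^{\star}$ is explicitly controlled. Throughout, I would work with the companion relation $\vS^{}_{\circ} = S \vL^{}_{\circ}$, inherited from $\vS^{}_{\bullet} = S \vL^{}_{\bullet}$ under the shared torus parametrisation, so that $\vS^{}_{\circ}$ and $\vS^{}_{\circ} + t$ are precisely the $S$-images of $\vL^{}_{\circ}$ and $\vL^{}_{\circ} + S^{-1}t$. The guiding idea is to apply Proposition~\ref{prop:basic} twice to the \emph{same} set $\vL^{}_{\circ}$ — once forward with $t$, once backward with $S^{-1}t$ — so that no comparison between the two different windows $\vO_{\vartheta}$ and $S^{\star}\vO$ is ever needed, and the passage to $\vS^{}_{\circ}$ is carried entirely by the geometric bijection $S$.

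Concretely, I would proceed as follows. First, from $\dd(\vL^{}_{\circ}, \vL^{}_{\circ} + t) < \varepsilon$ the first half of Proposition~\ref{prop:basic} yields the internal-space bound $\| t^{\star} \| < \delta(\varepsilon)$. Next, since $t \in \ZZ[\tau]^{2}$ and the off-diagonal entry $\alpha$ of $S$ is a unit in $\ZZ[\tau]$, the inverse $S^{-1}$ again has entries in $\ZZ[\tau]$, so $S^{-1}t$ is once more a lattice translation, and a short computation in internal space gives
\[
  \bigl\| (S^{-1}t)^{\star} \bigr\|
  \,\leqslant\, \bigl( 1+|\alpha^{\star}| \bigr)\, \| t^{\star} \|
  \,<\, \bigl( 1+|\alpha^{\star}| \bigr)\, \delta(\varepsilon) \ts ,
\]
with the analogous estimate for the lower-triangular form of $S$. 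Feeding this back into the second half of Proposition~\ref{prop:basic} produces some $\widetilde{\varepsilon} \in \cO(\varepsilon)$ for which $\vL^{}_{\circ}$ and $\vL^{}_{\circ} + S^{-1}t$ are $\widetilde{\varepsilon}$-close, i.e.\ agree on $\overline{B^{}_{\sfrac{1}{\widetilde{\varepsilon}}}(0)}$ up to a small admissible shift. Applying the linear bijection $S$ then shows that $\vS^{}_{\circ} = S \vL^{}_{\circ}$ and $\vS^{}_{\circ} + t$ agree on the origin-centred ellipse $S\overline{B^{}_{\sfrac{1}{\widetilde{\varepsilon}}}(0)}$, which contains a ball $B^{}_{R}(0)$ with $R < \sfrac{c}{\widetilde{\varepsilon}}$, where $c$ is the smaller singular value of $S$. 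Choosing $\underline{\varepsilon}$ with $\sfrac{1}{\underline{\varepsilon}} \leqslant R$ then delivers $\dd(\vS^{}_{\circ}, \vS^{}_{\circ} + t) < \underline{\varepsilon}$, and since $c$ takes only finitely many fixed positive values as $\alpha$ ranges over $\{ \pm 1, \pm\tau, \pm\tau^{-1}\}$, one obtains $\underline{\varepsilon} \in \cO(\varepsilon)$.

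The hard part will be controlling the metric $\dd$ under the non-isometric map $S$. Because $\dd$ is not translation invariant, the $\widetilde{\varepsilon}$-closeness of $\vL^{}_{\circ}$ and $\vL^{}_{\circ} + S^{-1}t$ supplies agreement on $\overline{B^{}_{\sfrac{1}{\widetilde{\varepsilon}}}(0)}$ only after a small auxiliary shift, and applying $S$ both deforms this ball into an ellipse and rescales that shift by $S$. I would therefore have to verify two things: that the image ellipse still contains a ball of radius comparable to $\sfrac{1}{\widetilde{\varepsilon}}$ — which it does, governed by the singular value $c$ — and that the $S$-scaled auxiliary shift remains admissible for $\dd$, so that the resulting closeness of $\vS^{}_{\circ}$ and $\vS^{}_{\circ} + t$ is genuine rather than an artefact of the idealised ``exact agreement'' picture. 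The finiteness of the admissible set of $\alpha$ keeps the distortion factors $1+|\alpha^{\star}|$ and $c$ uniformly bounded, which is exactly what secures $\underline{\varepsilon} \in \cO(\varepsilon)$; the remaining routine bookkeeping is to confirm that the identification $\vS^{}_{\circ} = S\vL^{}_{\circ}$ is consistent with the shared torus parametrisation sending both $\vL^{}_{\circ}$ and $\vS^{}_{\circ}$ to $\vartheta$.
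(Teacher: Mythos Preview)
Your proposal is correct and follows essentially the same route as the paper's own argument, which is given in the paragraph immediately preceding the proposition: apply Proposition~\ref{prop:basic} to pass from $\varepsilon$-closeness to the bound $\|t^{\star}\|<\delta(\varepsilon)$, estimate $\|(S^{-1}t)^{\star}\|\leqslant(1+|\alpha^{\star}|)\,\delta(\varepsilon)$, reapply Proposition~\ref{prop:basic} to obtain $\widetilde{\varepsilon}$-closeness of $\vL^{}_{\circ}$ and $\vL^{}_{\circ}+S^{-1}t$, and finally push through $S$ and use the smaller singular value $c$ to extract the ball inside the image ellipse. Your additional remarks about the auxiliary shift under $S$ and about the consistency of $\vS^{}_{\circ}=S\vL^{}_{\circ}$ with the torus parametrisation are points the paper leaves implicit, so you are if anything slightly more careful than the original.
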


\begin{remark}
  Since the set $\vL$ is aperiodic, the set of its periods is trivial,
  so
\[
   \vL +t \,= \, \vL \quad \Longleftrightarrow \quad t \, = \, 0 \ts , 
\]
and the following implication holds for
$(t_n)^{}_{n\in\NN}$ with $t_n \in \ZZ[\tau]^2$,
\begin{equation}\label{eq:aper_impl}
  \lim_{n\to  \infty} \vL + t_n = \vL \quad
  \Longrightarrow \quad \lim_{n\to  \infty} t^{\star}_n = 0 \ts .
\end{equation}
This has an immediate consequence, namely $(t_n )^{}_{n\in\NN}$ is
eventually 0, or $t_n$ has a subsequence that grows
unboundedly. Therefore, if
\[
   \lim_{n\to \infty} \vL + t_n \, = \lim_{n\to \infty} \vL + s_n 
   \quad \mbox{for } t_n,s_n \in \ZZ[\tau]^2, 
\]
then $(t_n-s_n)^{}_{n\in\NN}$ is eventually 0, or unboundedly growing
such that $\lim_{n\to \infty} (t_n-s_n)^{\star} = 0$. Note that the
converse of \eqref{eq:aper_impl} is not true due to the existence of
singular elements in the hull $\XX$. Such elements always exist when
$\vL$ is aperiodic \cite{BLM}.  \exend
\end{remark}

Proposition \ref{prop:close} is a good starting point for our proof of
the continuity of $\phi$, as $\vS^{}_{\circ} = \phi(\vL^{}_{\circ})$
and $\vS^{}_{\circ} + t = \phi(\vL^{}_{\circ}+t)$. Since the distance
$\dd$ is not translation invariant, one has to work with converging
sequences. Thus, our next aim is to show that, if
$\vL^{}_{\circ} + t_n$ approaches $\vL^{}_{\circ}$ in $\XX$, then
$\phi(\vL^{}_{\circ} + t_n)$ approaches $\phi(\vL^{}_{\circ})$ in
$\YY$.

\begin{lemma}\label{cor:coro1}
  Let\/ $\vL^{}_{\circ}$ be the regular, generic model set
  from~\eqref{eq:def}. If\/ $\vL^{}_{\circ} + t_n$ approximates
  $\vL^{}_{\circ}$ in $\XX$ with $t_n \in \RR^2$, then\/
  $\phi(\vL^{}_{\circ} + t_n)$ approximates\/ $\phi(\vL^{}_{\circ})$ in
  $\YY$, formally
\[ 
  \lim_{n\to \infty} \dd(\vL^{}_{\circ},  \vL^{}_{\circ}  + t_n) = 0
  \, \Longrightarrow  
  \lim_{n\to \infty} \dd\bigl(\phi(\vL^{}_{\circ}),
  \phi(\vL^{}_{\circ} + t_n)\bigr) = 0 \ts .  
\]
\end{lemma}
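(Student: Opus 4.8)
The plan is to reduce the arbitrary real translations $t_n \in \RR^2$ to translations by lattice vectors in $\ZZ[\tau]^2$, for which Proposition~\ref{prop:close} is available, and then to reabsorb the leftover small shift using the translational tolerance built into the local metric $\dd$. The reason this detour is needed is that Proposition~\ref{prop:close} is stated only for $t \in \ZZ[\tau]^2$, whereas here $t_n$ ranges over all of $\RR^2$.

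First I would unwind the hypothesis. Since $\dd(\vL^{}_{\circ}, \vL^{}_{\circ} + t_n) \to 0$, I may pick a sequence $\varepsilon_n \to 0$ for which $\vL^{}_{\circ}$ and $\vL^{}_{\circ} + t_n$ are $\varepsilon_n$-close; by the definition of $\dd$ this furnishes a shift $s_n \in B_{\varepsilon_n}(0)$ with
\[
  \vL^{}_{\circ} \cap \overline{B_{1/\varepsilon_n}(0)} \, = \, (\vL^{}_{\circ} + t_n + s_n) \cap \overline{B_{1/\varepsilon_n}(0)} \ts .
\]
Setting $u_n \defeq t_n + s_n$, the sets $\vL^{}_{\circ}$ and $\vL^{}_{\circ} + u_n$ then agree exactly on $\overline{B_{1/\varepsilon_n}(0)}$, so they are $\varepsilon_n$-close. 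The key observation is that $u_n \in \ZZ[\tau]^2$: for $n$ large enough that $1/\varepsilon_n$ exceeds the covering radius of the Delone set $\vL^{}_{\circ}$, the ball contains a point $x \in \vL^{}_{\circ}$, and the exact agreement forces $x \in \vL^{}_{\circ} + u_n$, hence $x - u_n \in \vL^{}_{\circ} \subset \ZZ[\tau]^2$; combined with $x \in \ZZ[\tau]^2$ this gives $u_n = x - (x - u_n) \in \ZZ[\tau]^2$. Thus $t_n = u_n - s_n$ splits into a lattice part $u_n$ and a vanishing remainder $s_n \to 0$.

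Next, Proposition~\ref{prop:close} applies to $u_n \in \ZZ[\tau]^2$: from the $\varepsilon_n$-closeness of $\vL^{}_{\circ}$ and $\vL^{}_{\circ} + u_n$ it produces $\underline{\varepsilon}_n \in \cO(\varepsilon_n)$ such that $\vS^{}_{\circ}$ and $\vS^{}_{\circ} + u_n$ are $\underline{\varepsilon}_n$-close, i.e. there is $w_n \in B_{\underline{\varepsilon}_n}(0)$ for which $\vS^{}_{\circ}$ and $\vS^{}_{\circ} + u_n + w_n$ agree on $\overline{B_{1/\underline{\varepsilon}_n}(0)}$. The hard part, and the genuine obstacle, is the last step: since $\dd$ is \emph{not} translation invariant, I cannot simply pass from the lattice translation $u_n$ back to the real translation $t_n = u_n - s_n$ for free. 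Here I would exploit the global shift allowed by the metric by choosing $w' \defeq w_n + s_n$. Then $\vS^{}_{\circ} + t_n + w' = \vS^{}_{\circ} + u_n + w_n$ agrees with $\vS^{}_{\circ}$ on $\overline{B_{1/\underline{\varepsilon}_n}(0)}$, while $\|w'\| \leqslant \|w_n\| + \|s_n\| < \underline{\varepsilon}_n + \varepsilon_n \eqdef \eta_n$. Because $\eta_n \geqslant \underline{\varepsilon}_n$ yields $\overline{B_{1/\eta_n}(0)} \subseteq \overline{B_{1/\underline{\varepsilon}_n}(0)}$, the agreement survives on the smaller ball, so $\vS^{}_{\circ}$ and $\vS^{}_{\circ} + t_n$ are $\eta_n$-close with $\eta_n \to 0$. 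Recalling $\phi(\vL^{}_{\circ}) = \vS^{}_{\circ}$ and $\phi(\vL^{}_{\circ} + t_n) = \vS^{}_{\circ} + t_n$, this is precisely $\dd\bigl(\phi(\vL^{}_{\circ}), \phi(\vL^{}_{\circ} + t_n)\bigr) \to 0$, completing the argument.
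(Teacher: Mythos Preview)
Your argument is correct and follows the same overall strategy as the paper: reduce from real translations to lattice translations in $\ZZ[\tau]^2$, invoke Proposition~\ref{prop:close}, and then reabsorb the small leftover shift. The one genuine difference is in how the lattice approximant is produced. The paper appeals abstractly to the density of $\ZZ[\tau]^2$ in $\RR^2$, picking $\delta_n$ with $t_n+\delta_n\in\ZZ[\tau]^2$ and $\|\delta_n\|<2^{-n}$, and then uses the triangle inequality for $\dd$ twice. You instead extract the small shift $s_n$ directly from the definition of $\varepsilon_n$-closeness and observe that the resulting exact coincidence on $\overline{B_{1/\varepsilon_n}(0)}$ forces $u_n=t_n+s_n\in\ZZ[\tau]^2$, because $\vL^{}_{\circ}\subset\ZZ[\tau]^2$. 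This is a neat shortcut: it makes the lattice part canonical rather than arbitrary and lets you finish by adjusting the witness shift to $w_n+s_n$ instead of invoking the triangle inequality for $\dd$. The paper's route, on the other hand, never needs the observation that $\vL^{}_{\circ}$ sits inside $\ZZ[\tau]^2$, so it would transfer verbatim to situations where the reference element is a translate not contained in $L$. Either way the conclusion is the same.
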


\begin{proof}
  We distinguish between two cases. First, let us assume that the
  sequence $(t_n )^{}_{n\in\NN}$ consists of elements of
  $ \ZZ[\tau]^{2}$ only. Then, the claim holds due to Proposition
  \ref{prop:close}.

  Thus, suppose that the $t_n$ lie in $\RR^{2}$. Since
  $\ZZ[\tau]^{2}$ is dense in $\RR^2$, there exists a sequence
  $(\delta_n)_{n\in \NN}$ with $\delta_n \in \RR^2$ such that
  $t_n + \delta_n \in \ZZ[\tau]^2$ and $\|\delta_n\| < \frac{1}{2^n}$
  for all $n\in\NN$ . Since $\vL^{}_{\circ} + t_n$ and
  $\vL^{}_{\circ} + t_n +\delta_n$ agree on the whole plane up to a
  translation by $\delta_n$, they are $\frac{1}{2^n}$-close. Using the
  triangle inequality, one obtains
\[ 
  \dd(\vL^{}_{\circ},  \vL^{}_{\circ} + t^{}_n + \delta^{}_n)
   \, \leqslant \,  \dd(\vL^{}_{\circ}, \vL^{}_{\circ} + t^{}_n) 
   + \dd(\vL^{}_{\circ} + t^{}_n, \vL^{}_{\circ} + t^{}_n + \delta^{}_n)
   \, < \, \dd(\vL^{}_{\circ},  \vL^{}_{\circ} + t^{}_n) + \myfrac{1}{2^n} \ts . 
\]
Therefore,
$ \dd(\vL^{}_{\circ}, \vL^{}_{\circ} + t^{}_n + \delta^{}_n) \xrightarrow{n
  \to \infty} 0 $, and we can use the first part of this claim since
$t_n + \delta_n \in \ZZ[\tau]^2$. Thus,
\[
  \lim_{n\to + \infty} \dd\bigl(\phi(\vL^{}_{\circ}),
  \phi(\vL^{}_{\circ} + t_n + \delta_n) \bigr) \, = \, 0 \ts . 
\]
From the fact that $\vL^{}_{\circ} + t_n$ and
$\vL^{}_{\circ} + t_n +\delta_n$ are $\frac{1}{2^n}$-close, it follows
(by construction) that also $\phi(\vL^{}_{\circ} + t_n)$ and
$\phi(\vL^{}_{\circ} + t^{}_n + \delta^{}_n) = \phi(\vL^{}_{\circ} +
t^{}_n) +\delta_n$ are $\frac{1}{2^n}$-close. Thus, using the triangle
inequality again, one concludes that
\[ 
  \dd\bigl(\phi(\vL^{}_{\circ}), \phi(\vL^{}_{\circ} + t^{}_n )\bigr)
  \,\leqslant \, \dd\bigl(\phi(\vL^{}_{\circ}),
  \phi(\vL^{}_{\circ} + t^{}_n + \delta^{}_n ) \bigr) + 
  \myfrac{1}{2^n} \, \xrightarrow{n \to \infty} \, 0 \ts , 
\]
which proves our claim.
\end{proof}

\begin{lemma}\label{cor:coro2}
  Let\/ $T \in \RR^2$. Then, for the regular, generic model set\/
  $\vL^{}_{\circ}$ from~\eqref{eq:def}, one has
\[ 
  \lim_{n\to \infty} \dd(\vL^{}_{\circ} +T , \vL^{}_{\circ} + t^{}_n )=0
  \, \Longrightarrow \lim_{n\to \infty} \dd \bigl(\phi(\vL^{}_{\circ}+T ),
  \phi(\vL^{}_{\circ} + t_n) \bigr) \, = \, 0 \ts .  
\]
\end{lemma}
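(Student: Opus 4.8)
The plan is to reduce the statement to Lemma~\ref{cor:coro1}, which is precisely the special case $T = 0$. The single conceptual point to keep in mind is that, although the metric $\dd$ is \emph{not} translation invariant, the topology it generates on the hull \emph{is}: each translation $g^{}_t$ is a homeomorphism of the (compact) hull, since $(\XX,\RR^2)$ and $(\YY,\RR^2)$ are topological dynamical systems with continuous group action. This is exactly what permits moving the base point from $\vL^{}_{\circ}+T$ to the generic base point $\vL^{}_{\circ}$, applying Lemma~\ref{cor:coro1}, and moving back on the $\YY$ side.

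Concretely, I would first set $s^{}_n \defeq t^{}_n - T \in \RR^2$ and rephrase the hypothesis $\dd(\vL^{}_{\circ}+T,\, \vL^{}_{\circ}+t^{}_n)\to 0$ as the convergence $\vL^{}_{\circ}+t^{}_n \to \vL^{}_{\circ}+T$ in $\XX$. Applying the homeomorphism $g^{}_{-T}$ then yields $\vL^{}_{\circ}+s^{}_n \to \vL^{}_{\circ}$, that is, $\dd(\vL^{}_{\circ},\, \vL^{}_{\circ}+s^{}_n)\to 0$. Since $\vL^{}_{\circ}$ is the generic model set from \eqref{eq:def} and $s^{}_n\in\RR^2$, Lemma~\ref{cor:coro1} applies to the sequence $(s^{}_n)^{}_{n\in\NN}$ and gives $\dd\bigl(\phi(\vL^{}_{\circ}),\, \phi(\vL^{}_{\circ}+s^{}_n)\bigr)\to 0$. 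By the orbit definition of $\phi$ this reads $\dd(\vS^{}_{\circ},\, \vS^{}_{\circ}+s^{}_n)\to 0$, i.e.\ $\vS^{}_{\circ}+s^{}_n\to\vS^{}_{\circ}$ in $\YY$. Finally, applying the homeomorphism $g^{}_{T}$ on $\YY$ together with the commutation relation \eqref{eq:com} gives $g^{}_{T}(\vS^{}_{\circ}+s^{}_n)=\vS^{}_{\circ}+t^{}_n=\phi(\vL^{}_{\circ}+t^{}_n)$ and $g^{}_{T}(\vS^{}_{\circ})=\vS^{}_{\circ}+T=\phi(\vL^{}_{\circ}+T)$; continuity of $g^{}_{T}$ then yields $\dd\bigl(\phi(\vL^{}_{\circ}+T),\, \phi(\vL^{}_{\circ}+t^{}_n)\bigr)\to 0$, which is the claim.

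The main thing to articulate carefully — and the only real obstacle — is the justification for passing freely under the two translations even though $\dd$ is not translation invariant: the licence comes from the continuity of the $\RR^2$-action on the hulls, not from any property of the metric itself; equivalently, one may phrase the two shifts through the uniform continuity of $g^{}_{\pm T}$ on the compact hulls $\XX$ and $\YY$. A secondary point worth flagging is that $\vL^{}_{\circ}+T$ need not be a generic model set, which is precisely why Lemma~\ref{cor:coro1} cannot be invoked directly at the base point $\vL^{}_{\circ}+T$ and why the reduction to the generic base point $\vL^{}_{\circ}$ is needed.
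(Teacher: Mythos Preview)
Your proof is correct and follows the same overall reduction as the paper: shift by $-T$ to land at the generic base point $\vL^{}_{\circ}$, invoke Lemma~\ref{cor:coro1}, then shift back by $+T$ on the $\YY$ side. The technical difference lies in how the two translation steps are justified. You appeal to the continuity of the $\RR^2$-action on the hulls (each $g^{}_{\pm T}$ is a homeomorphism), which is clean and conceptual and is indeed available once $(\XX,\RR^2)$ and $(\YY,\RR^2)$ have been declared topological dynamical systems. The paper instead carries out the first shift by hand at the level of the metric: if $\vL^{}_{\circ}+T$ and $\vL^{}_{\circ}+t^{}_n$ agree on $\overline{B_{1/\varepsilon_n}(0)}$, then $\vL^{}_{\circ}$ and $\vL^{}_{\circ}+(t^{}_n-T)$ agree on $\overline{B_{1/\varepsilon_n}(T)}$, which eventually contains $B_{1/\varepsilon'_n}(0)$ with $\varepsilon'_n\to 0$. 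The paper then absorbs the reverse shift into the phrase ``Lemma~\ref{cor:coro1} now implies the claim,'' whereas you make that step explicit. Your route is tidier and less dependent on the particular form of $\dd$; the paper's route has the virtue of being entirely self-contained at the metric level and of making the quantitative dependence on $\|T\|$ visible.
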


\begin{proof}
  Suppose that
  $\lim_{n\to \infty} \dd(\vL^{}_{\circ} +T , \, \vL^{}_{\circ} +
  t_n)=0$. Then, $\vL^{}_{\circ} +T$ and $\vL^{}_{\circ} + t^{}_n$ are
  $\varepsilon_n$-close which means that they coincide on a ball
  $\overline{B_{\sfrac{1}{\varepsilon_n}}(0)}$ up to a small
  translation. This is equivalent to the statement that
  $\vL^{}_{\circ}$ and $\vL^{}_{\circ} +t^{}_n -T$ agree on a ball
  $\overline{B_{\sfrac{1}{\varepsilon_n}}(T)}$. From the assumption,
  one has $\varepsilon_n \to 0$ as $n\to\infty$ and thus there exists
  $n^{}_0$ with $0 \in \overline{B_{\sfrac{1}{\varepsilon_n}}(T)}$ for
  all $n\geqslant n^{}_0$. Then,
  $B_{\sfrac{1}{\varepsilon'_n}}(0) \subset
  \overline{B_{\sfrac{1}{\varepsilon_n}}(T)}$ with
  $\frac{1}{\varepsilon'_n} = \frac{1}{\varepsilon_n} - \|T\|$. Hence
  $\varepsilon'_n = \frac{1}{\sfrac{1}{\varepsilon_n}-\|T\|}$, which
  shows that $\varepsilon'_n \to 0$ as $n\to
  \infty$. Lemma~\ref{cor:coro1} now implies the claim.
\end{proof}

At this point, one has to extend the mapping $\phi$ to the closure of
the orbit of $\vL^{}_{\circ}$, i.e., to the hull $\XX$. Note that this
step profits from the minimality of $\XX$ and from the fact that we
may use the same torus for $\XX$ and $\YY$. Recall that
$\partial \vO_{\vartheta} \cap L^{\star} = \varnothing$ if and only if
$\partial (S^{\star}\vO_{\vartheta}) \cap S^{\star}L^{\star} =\partial
(S^{\star}\vO_{\vartheta}) \cap L^{\star} = \varnothing$. This allows
us to identify the fibres corresponding to singular points in each
hull. Thus, if a sequence $(\vL^{}_{\circ} + t_n )^{}_{n\in\NN}$
converges towards a singular element $\widetilde{\vL} \in \XX$, our
choice ensures that, \emph{if} $(\vS^{}_{\circ} + t^{}_n )^{}_{n\in\NN}$
converges in $\YY$, the resulting limit point $\widetilde{\vS}$ is
also singular. This would allow us to \emph{continuously} extend the
mapping $\phi$ to the hull $\XX$ by defining
$\phi(\widetilde{\vL}) \defeq \lim_{n \to \infty} \vS^{}_{\circ} +
t^{}_n = \widetilde{\vS}$.

The following lemma shows that the above consideration about the
convergence holds.

\begin{lemma}\label{lem:extension}
  Let\/ $\vL^{}_{\circ}$ be the regular, generic model set
  from~\eqref{eq:def}. Suppose that we have\/
  $ \vL^{}_{\circ} + t^{}_n \xrightarrow{n \to \infty} \widetilde{\vL}$
  with\/ $\widetilde{\vL}\in \XX$. Then, the sequence\/
  $(\vS^{}_{\circ} + t^{}_n )^{}_{n\in\NN}$ converges in $\YY$.
\end{lemma}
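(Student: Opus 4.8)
The plan is to show that the sequence $(\vS^{}_{\circ}+t_n)^{}_{n\in\NN}$ has a \emph{unique} accumulation point in the hull $\YY$. Since the defining inflation is primitive, $\YY$ is compact in the local topology, so the sequence automatically has convergent subsequences, and everything reduces to proving that any two subsequential limits agree. First I would reduce to the arithmetically convenient case $t_n\in\ZZ[\tau]^2$. Indeed, as $\ZZ[\tau]^2$ is dense in $\RR^2$, one may pick $t'_n\in\ZZ[\tau]^2$ with $\|t_n-t'_n\|<2^{-n}$; then $\vL^{}_{\circ}+t_n$ and $\vL^{}_{\circ}+t'_n$ are $2^{-n}$-close (and likewise for the $\vS^{}_{\circ}$-translates), so $\vL^{}_{\circ}+t'_n\to\widetilde{\vL}$ as well, and the two $\YY$-sequences share their limiting behaviour. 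I may therefore assume $t_n\in\ZZ[\tau]^2$ throughout.

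The second step exploits the \emph{shared} torus. By construction one has $\beta^{}_{\YY}(\vS^{}_{\circ}+t)=\beta^{}_{\XX}(\vL^{}_{\circ}+t)$ for every $t$, so continuity of $\beta^{}_{\YY}$ together with the hypothesis $\vL^{}_{\circ}+t_n\to\widetilde{\vL}$ gives
\[
  \beta^{}_{\YY}(\vS^{}_{\circ}+t_n)\,=\,\beta^{}_{\XX}(\vL^{}_{\circ}+t_n)
  \,\xrightarrow{\,n\to\infty\,}\,\beta^{}_{\XX}(\widetilde{\vL})\,=:\,\eta\ts .
\]
Hence every subsequential limit of $(\vS^{}_{\circ}+t_n)$ lies in the single fibre $\beta_{\YY}^{-1}(\eta)$. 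If $\eta$ is a regular point of the torus, this fibre is a singleton and the sequence converges immediately. By the boundary correspondence $\partial\vO_{\vartheta}\cap L^{\star}=\varnothing$ iff $\partial(S^{\star}\vO_{\vartheta})\cap L^{\star}=\varnothing$ recalled before the lemma (which holds at every torus point), regularity over $\eta$ for $\XX$ and for $\YY$ occurs simultaneously, so the only case left is that $\widetilde{\vL}$ --- and any limit of the $\vS^{}_{\circ}$-sequence --- is singular.

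For the singular case I would compare two subsequential limits directly. Suppose $\vS^{}_{\circ}+t^{}_{n_k}\to\widetilde{\vS}_1$ and $\vS^{}_{\circ}+t^{}_{m_k}\to\widetilde{\vS}_2$. Both associated $\XX$-sequences converge to the same $\widetilde{\vL}$, so $\lim_k\vL^{}_{\circ}+t^{}_{n_k}=\lim_k\vL^{}_{\circ}+t^{}_{m_k}$, and the aperiodicity remark around \eqref{eq:aper_impl} applies to the sequence $(t^{}_{n_k}-t^{}_{m_k})^{}_{k\in\NN}$ in $\ZZ[\tau]^2$: it is either eventually $0$ or grows unboundedly with $(t^{}_{n_k}-t^{}_{m_k})^{\star}\to 0$. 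In the first case $\widetilde{\vS}_1=\widetilde{\vS}_2$ is clear, and in the second it suffices to prove that $\dd(\vS^{}_{\circ}+t^{}_{n_k},\,\vS^{}_{\circ}+t^{}_{m_k})\to 0$, since then the triangle inequality forces $\widetilde{\vS}_1=\widetilde{\vS}_2$ and, all subsequential limits coinciding, the whole sequence converges.

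The hard part will be precisely this last estimate --- the subtle limit argument flagged in the overall strategy. Writing $\vS^{}_{\circ}+t^{}_{m_k}=(\vS^{}_{\circ}+t^{}_{n_k})+(t^{}_{m_k}-t^{}_{n_k})$, the two sets differ by a translation whose internal part tends to $0$, and morally Lemma~\ref{lem:translations} (applied inside $\YY$) should yield agreement on ever larger balls. The genuine difficulty is uniformity: as $\vS^{}_{\circ}+t^{}_{n_k}$ approaches the singular limit, the margin $\delta(R)$ furnished by Lemma~\ref{lem:translations} may shrink to $0$ and race against $(t^{}_{m_k}-t^{}_{n_k})^{\star}\to0$. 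I would control this using the polygonal shape of the windows: every element of $\YY$ has window a translate of the polygon $S^{\star}\vO$ with only finitely many facets, and $S^{\star}$ is a linear bijection of $L^{\star}=\ZZ[\tau]^2$ carrying facets to facets (the Facts following \eqref{eq:sh_mat}). Fixing $R$, only finitely many lattice points $z\in\ZZ[\tau]^2$ meet $B_R(0)$; for each such $z$ whose internal datum $z^{\star}-t^{\star}_{n_k}$ stays bounded away from the finitely many facets, the vanishing perturbation $(t^{}_{m_k}-t^{}_{n_k})^{\star}$ cannot change its window membership for large $k$, whereas the remaining, boundary-hugging points are exactly those of the singular fibre, whose inclusion is forced to agree for the two windows because $S^{\star}$ matches the two facet-and-lattice-point configurations --- this being the fibre identification set up before the lemma. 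Hence $\vS^{}_{\circ}+t^{}_{n_k}$ and $\vS^{}_{\circ}+t^{}_{m_k}$ contain the same points of $B_R(0)$ for all large $k$, which gives $\dd(\vS^{}_{\circ}+t^{}_{n_k},\,\vS^{}_{\circ}+t^{}_{m_k})\to0$ and completes the proof.
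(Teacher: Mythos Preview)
Your reduction to $t_n\in\ZZ[\tau]^2$ and your handling of the generic case via the shared torus parametrisation are fine and match the paper. The gap is in the singular case.

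First, a concrete slip: the assertion that ``only finitely many lattice points $z\in\ZZ[\tau]^2$ meet $B_R(0)$'' is false, since $\ZZ[\tau]^2$ is dense in $\RR^2$. You presumably have in mind the finitely many points of the Delone sets $\vS^{}_\circ+t^{}_{n_k}$ and $\vS^{}_\circ+t^{}_{m_k}$ in $B_R(0)$, but those points vary with $k$, so one cannot fix a finite list and track each member as $k\to\infty$.

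More seriously, your strategy is to obtain $\dd(\vS^{}_\circ+t^{}_{n_k},\vS^{}_\circ+t^{}_{m_k})\to 0$ from $(t^{}_{n_k}-t^{}_{m_k})^\star\to 0$ together with polygonal window geometry. This is precisely the converse of \eqref{eq:aper_impl}, which the paper's remark explicitly says \emph{fails} near singular elements. Your proposed rescue via ``boundary-hugging points'' does not close the gap: a point $z$ has $z^\star-t^{\star}_{n_k}$ close to $\partial(S^\star\vO_\vartheta)$ if and only if $(S^{-1}z)^\star-(S^{-1}t^{}_{n_k})^\star$ is close to $\partial\vO_\vartheta$, so the boundary-hugging points for the $\YY$-window are \emph{not} the same points as those for the $\XX$-window. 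Knowing that the $\XX$-subsequences agree on the latter does not determine membership for the former, and the ``facet matching under $S^\star$'' you invoke does not bridge this.

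The paper sidesteps the whole internal-space analysis. Since $\vL^{}_\circ+t^{}_{n_k}$ and $\vL^{}_\circ+t^{}_{m_k}$ converge to the \emph{same} $\widetilde{\vL}$, the $\XX$-sequence is Cauchy: $\dd(\vL^{}_\circ+t^{}_{n_k},\vL^{}_\circ+t^{}_{m_k})\to 0$. The already-established orbit continuity (Lemma~\ref{cor:coro2}, resting on Proposition~\ref{prop:close}) then transfers this directly to $\dd(\vS^{}_\circ+t^{}_{n_k},\vS^{}_\circ+t^{}_{m_k})\to 0$, with no reference to facets or boundary configurations. The point is that Proposition~\ref{prop:close} already did the delicate comparison once and for all at the basepoint $\vL^{}_\circ$; the extension lemma only needs to harvest it, not redo it at every approximant.
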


\begin{proof}
  If $\vL^{}_{\circ} +t^{}_n$ converges towards a generic element
  $\widetilde{\vL}$, then
  $\beta^{}_{\XX}(\vL^{}_{\circ} +t^{}_n) \xrightarrow{n \to \infty}
  \beta_{\XX}(\widetilde{\vL})$. Since
  $\beta^{}_{\XX}(\vL^{}_{\circ} +t^{}_n) = \beta^{}_{\YY}(\vS^{}_{\circ}
  +t^{}_n)$ and
  $\beta^{-1}_{\YY}\bigl( \{\beta^{}_{\XX}(\widetilde{\vL})\}\bigr)$
  contains only one element, the claim follows immediately.

  Now, suppose that the limit point $\widetilde{\vL}$ of
  $\vL^{}_{\circ} +t_n$ is a singular element of $\XX$. We show the
  claim by a contradiction. For this purpose suppose that
  $\lim_{n \to \infty} \vS^{}_{\circ} + t^{}_n$ does not exist, i.e.,
  the sequence $(\vS^{}_{\circ} + t^{}_n )_{n\in\NN}$ has at least two
  accumulation points. Without loss of generality, one may assume that
  $(\vS^{}_{\circ} + t^{}_n )^{}_{n \in\NN}$ has exactly two
  accumulation points, say $\vS_1$ and $\vS_2$ with
  $\vS_1 \neq \vS_2$. Then, there exist two subsequences of
  $(t_n)^{}_{n\in\NN}$, say
  $(s_n)^{}_{n\in\NN} = (t_{k_n})^{}_{n\in\NN}$ and
  $(r_n)^{}_{n\in\NN} = (t_{\ell_n})^{}_{n\in\NN}$ with
  $\vS_1 = \lim_{n \to \infty} \vS^{}_{\circ} + s_n$, and
  $\vS_2 = \lim_{n \to \infty} \vS^{}_{\circ} + r_n$,
  respectively. These subsequences can be chosen so that
  $\{s_n \, : \, n \in \NN\} \cap \{r_n \, : \, n \in \NN\} =
  \varnothing$.

  Since the limit points are not equal, there exists a position where
  they differ. This allows us to find a ball centred at the origin on
  which $\vS_1$ and $\vS_2$ do not coincide. Thus, there is an index
  $n^{}_0$ with
\begin{equation}\label{eq:difference}
  \bigl( (\vS^{}_{\circ} + s^{}_{n_0}) \triangle
  (\vS^{}_{\circ} + r^{}_{n_0}) \bigr) \cap B_{n_0}(0)
  \, \neq \, \varnothing 
\end{equation}
with $\triangle$ standing for the symmetric difference of two
sets. Set $N \defeq \max\, \{k_{n_0},\ell_{n_0}\}$ and define
\[
   \widetilde{s}_n = 
    \begin{cases}
        s_n, & \text{if $n\leqslant n_0$,}\\
        t^{}_N, & \text{otherwise,}
    \end{cases} \qquad \mbox{and} \qquad 
    \widetilde{r}_n = 
    \begin{cases}
        r_n, & \text{if $n\leqslant n_0$,}\\
        t^{}_N, & \text{otherwise.}
    \end{cases} 
\]
By construction, one has
$\vL^{}_{\circ}+\widetilde{s}^{}_n \xrightarrow{n \to
  \infty}\vL^{}_{\circ}+t^{}_N$ and
$\vL^{}_{\circ}+\widetilde{r}^{}_n \xrightarrow{n \to \infty}
\vL^{}_{\circ}+t^{}_N$. Lemma~\ref{cor:coro2} now implies that
$\vS^{}_{\circ}+ \widetilde{s}^{}_n \xrightarrow{n \to \infty}
\vS^{}_{\circ}+t^{}_N$ and
$\vS^{}_{\circ}+\widetilde{r}^{}_n \xrightarrow{n \to \infty}
\vS^{}_{\circ}+t^{}_N$. In particular, this convergence gives
$\bigl( (\vS^{}_{\circ} + s_{n_0}) \triangle (\vS^{}_{\circ} + t^{}_N)
\bigr) \cap B_{n_0}(0) = \varnothing $, and
$\bigl( (\vS^{}_{\circ} + r_{n_0}) \triangle (\vS^{}_{\circ} + t^{}_N)
\bigr) \cap B_{n_0}(0) = \varnothing $ which together with inclusion
\[ 
  (\vS^{}_{\circ} + s_{n_0}) \triangle (\vS^{}_{\circ} + r_{n_0})
  \, \subset \, \bigl((\vS^{}_{\circ} + s_{n_0}) 
  \triangle (\vS^{}_{\circ} + t_N)\bigr) \cup \bigl(
  (\vS^{}_{\circ} + r_{n_0}) \triangle (\vS^{}_{\circ} + t_N) \bigr) 
\]
contradicts \eqref{eq:difference}. 
\end{proof}

It is an easy exercise to show that the extension $\phi$ commutes with
the translation using \eqref{eq:com} together with a sequence
$(t_n)^{}_{n\in\NN}$ and taking the limit. The inverse is also
well-defined due to Lemma \ref{lem:extension}. Let us summarise the
obtained results as follows.

\begin{prop}
  Let\/ $\vL^{}_{\circ} \in \XX$ and $\vS^{}_{\circ}\in \YY$ be as
  defined in\/ \eqref{eq:def}. Then, the mapping\/ $\phi$ defined
  by
\begin{enumerate}\itemsep=2pt
\item $\phi(\vL^{}_{\circ} + t ) \defeq \vS^{}_{\circ} + t$ for every
  $t \in \RR^2$ and
\item if
  $\widetilde{\vL} = \lim_{n\to \infty}\vL^{}_{\circ} + t^{}_n$, then
  $\phi(\widetilde{\vL}) \defeq \lim_{n\to \infty}\vS^{}_{\circ} +
  t^{}_n$
\end{enumerate}
is a homeomorphism between\/
$\XX = \overline{\vL^{}_{\circ} + \RR^2 \ts }^{\mathrm{LT}}$ and\/
$\YY = \overline{\vS^{}_{\circ}+ \RR^2 \ts }^{\mathrm{LT}}$ that commutes
with the group action\/ $g^{}_t$. In other words, $\phi$ is a
topological conjugacy.  \qed
\end{prop}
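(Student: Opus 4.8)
The plan is to collect the lemmas above into a single argument showing that the extension of $\phi$ to all of $\XX$ is a well-defined, continuous, $g^{}_t$-commuting bijection, and I would organise this into four steps.

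\emph{Well-definedness.} Given $\widetilde{\vL}\in\XX$, minimality of $\XX$ lets me choose a sequence $t^{}_n\in\RR^2$ with $\vL^{}_{\circ}+t^{}_n\to\widetilde{\vL}$, and Lemma~\ref{lem:extension} then guarantees that $(\vS^{}_{\circ}+t^{}_n)^{}_{n\in\NN}$ converges in $\YY$, so item~(2) of the statement makes sense. To see that the limit is independent of the approximating sequence, I would take a second sequence $s^{}_n$ with $\vL^{}_{\circ}+s^{}_n\to\widetilde{\vL}$ and interleave the two as $t^{}_1,s^{}_1,t^{}_2,s^{}_2,\dots$; this combined sequence still converges to $\widetilde{\vL}$, so by Lemma~\ref{lem:extension} its image converges, which forces $\lim_n \vS^{}_{\circ}+t^{}_n = \lim_n \vS^{}_{\circ}+s^{}_n$ (for a limit lying on the orbit this is already Lemma~\ref{cor:coro2}). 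Passing to the limit in $\beta^{}_{\XX}(\vL^{}_{\circ}+t)=\beta^{}_{\YY}(\vS^{}_{\circ}+t)$ also records the useful identity $\beta^{}_{\YY}\circ\phi = \beta^{}_{\XX}$ on all of $\XX$.

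\emph{Commutation and continuity.} The relation $g^{}_t\circ\phi = \phi\circ g^{}_t$ follows by the limit argument indicated after \eqref{eq:com}: for $\widetilde{\vL}=\lim_n\vL^{}_{\circ}+t^{}_n$, continuity of $g^{}_t$ gives $g^{}_t(\phi(\widetilde{\vL}))=\lim_n \vS^{}_{\circ}+t^{}_n+t$, while $\phi(g^{}_t(\widetilde{\vL}))=\phi(\lim_n \vL^{}_{\circ}+(t^{}_n+t))=\lim_n \vS^{}_{\circ}+(t^{}_n+t)$, and the two agree. For continuity on all of $\XX$ I would use a diagonal argument: given $\widetilde{\vL}^{}_k\to\widetilde{\vL}$, for each $k$ I pick an orbit point $\vL^{}_{\circ}+u^{}_k$ with both $\dd(\vL^{}_{\circ}+u^{}_k,\widetilde{\vL}^{}_k)<\frac{1}{k}$ and $\dd(\vS^{}_{\circ}+u^{}_k,\phi(\widetilde{\vL}^{}_k))<\frac{1}{k}$, which is possible since $\phi(\widetilde{\vL}^{}_k)$ is by construction a limit of images of orbit points approximating $\widetilde{\vL}^{}_k$. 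The triangle inequality gives $\vL^{}_{\circ}+u^{}_k\to\widetilde{\vL}$, so well-definedness yields $\vS^{}_{\circ}+u^{}_k\to\phi(\widetilde{\vL})$, and one more use of the triangle inequality gives $\phi(\widetilde{\vL}^{}_k)\to\phi(\widetilde{\vL})$. Lemmas~\ref{cor:coro1} and~\ref{cor:coro2} are exactly what make these approximation steps legitimate.

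\emph{Bijectivity and conclusion.} Since $S$ is invertible with $S^{-1}$ again a shear of the form \eqref{eq:sh_mat}, and $\vS^{}_{\circ}$ is a regular, generic model set for the same lattice $\cL$ mapped to the same torus point, all of the above applies verbatim with the roles of $(\XX,\vL^{}_{\circ})$ and $(\YY,\vS^{}_{\circ})$ exchanged. This yields a continuous, $g^{}_t$-commuting map $\psi\colon\YY\to\XX$ with $\psi(\vS^{}_{\circ}+t)=\vL^{}_{\circ}+t$. On the dense orbits one has $\psi\circ\phi=\mathrm{id}$ and $\phi\circ\psi=\mathrm{id}$, and continuity propagates these identities to all of $\XX$ and $\YY$; hence $\phi$ is a continuous bijection with continuous inverse, i.e.\ a homeomorphism, and together with the commutation relation it is the desired topological conjugacy. (Alternatively, one may simply note that a continuous bijection between compact metric spaces is automatically a homeomorphism.) The step I expect to be the main obstacle is the well-definedness and continuity of the extension at the \emph{singular} elements of the hull, where the torus parametrisation fails to be injective; this is precisely where Lemma~\ref{lem:extension} carries the weight, excluding the a~priori possibility that $\vS^{}_{\circ}+t^{}_n$ accumulates at several distinct points of the singular fibre over $\beta^{}_{\XX}(\widetilde{\vL})$. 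The matching of singular fibres under $S^{\star}$, namely $\partial\vO^{}_{\vartheta}\cap L^{\star}=\varnothing \Leftrightarrow \partial(S^{\star}\vO^{}_{\vartheta})\cap L^{\star}=\varnothing$, is what ensures that singular elements are sent to singular elements, so that $\phi$ and $\psi$ are genuinely inverse to one another.
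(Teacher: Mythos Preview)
Your proposal is correct and follows essentially the same route as the paper, which assembles Lemmas~\ref{cor:coro1}, \ref{cor:coro2} and \ref{lem:extension} together with the commutation relation~\eqref{eq:com} and the symmetry under $S\leftrightarrow S^{-1}$ into the final statement. You have spelled out in detail the well-definedness (via interleaving), continuity (via a diagonal argument), and bijectivity (via the inverse map $\psi$) that the paper only sketches in the sentence preceding the proposition.
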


Each tiling with a polygonal window (as in Figure \ref{fig:polygons})
is MLD to a sheared DP tiling with a matrix $S$ of the form
\eqref{eq:sh_mat}. This tiling, in turn, is topologically conjugate to
a direct product tiling (with the square window, see Figure
\ref{fig:squares}). Since all DP tilings are MLD, they are also
topologically conjugate, and since the topological conjugacy is a
transitive relation, we conclude that any two Fibonacci DPV tilings
with polygonal windows are topologically conjugate. We are now able to
state our main result as follows.

\begin{theorem}\label{thm:new}
  The\/ $28$ inflation tiling dynamical systems that emerge from the
  above DPVs with polygonal windows form one class of topologically
  conjugate dynamical systems. \qed
\end{theorem}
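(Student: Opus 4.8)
The plan is to assemble Theorem~\ref{thm:new} from the pieces already established, and the only genuine work is a careful accounting of how the $28$ systems split up and how the established relations chain together. First I would recall the census: the $48$ DPVs split into $28$ with polygonal windows and $20$ with fractal-like windows (Figure~\ref{fig:fractals}). Among the polygonal ones, four are the genuine direct product tilings with square windows (Figure~\ref{fig:squares}), forming a single orbit under $D_4$ and lying in one LI class, hence all MLD to one another; the remaining $24$ are the sheared cases with parallelogram windows (Figure~\ref{fig:polygons}). So the first step is simply to verify that $4 + 24 = 28$, and that every one of these $28$ falls under the hypotheses for which we have already built a topological conjugacy.

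The heart of the argument is transitivity of topological conjugacy. I would proceed as follows. By the Fact following Eq.~\eqref{eq:sh_mat}, each of the $24$ parallelogram-window tilings is MLD to a \emph{sheared} DP tiling $\vS_\bullet = S\vL_\bullet$, with $S$ one of the shearing matrices in \eqref{eq:sh_mat}. Since MLD tilings induce topologically conjugate dynamical systems, each such DPV system is topologically conjugate to the corresponding sheared-DP system. Now the Proposition just proven (the one constructing the homeomorphism $\phi$) shows that the sheared-DP system $(\YY,\RR^2)$ is topologically conjugate to the plain DP system $(\XX,\RR^2)$, via the orbit map extended through the common torus parametrisation. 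Finally, all four square-window DP tilings are MLD to one another, hence their systems are mutually topologically conjugate and, in particular, conjugate to the fixed reference DP system $(\XX,\RR^2)$.

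Chaining these relations is where the conclusion crystallises. Topological conjugacy of $\RR^2$-actions is an equivalence relation — reflexive, symmetric, and (crucially) transitive, since a composition of equivariant homeomorphisms is again an equivariant homeomorphism. Every one of the $28$ polygonal-window systems has now been linked, through at most two edges (an MLD-induced conjugacy followed by the shear conjugacy $\phi$), to the single reference system $(\XX,\RR^2)$. By transitivity they are therefore all topologically conjugate to one another, so the $28$ systems form one class, exactly as claimed.

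The step I expect to carry the real weight is not in this theorem itself but in the already-completed Proposition establishing $\phi$: that is where the subtle limit argument through singular fibres of the torus parametrisation lives, and it is precisely what upgrades the non-MLD shear to a genuine conjugacy. Granting that result, the present theorem is essentially bookkeeping. The one point demanding care is to confirm that \emph{every} admissible shear from \eqref{eq:sh_mat} — all six angles in both coordinate directions, i.e.\ all $12$ MLD classes among the $24$ — is covered by the Proposition, whose explicit estimate was written for $S = \left(\begin{smallmatrix} 1 & \alpha \\ 0 & 1 \end{smallmatrix}\right)$ but noted to hold verbatim for the transposed form $S = \left(\begin{smallmatrix} 1 & 0 \\ \alpha & 1 \end{smallmatrix}\right)$; since $\alpha$ ranges over the units $\{\pm 1, \pm\tau, \pm\tau^{-1}\}$, each keeping $(S\oplus S^\star)\cL = \cL$, the construction of $\phi$ applies uniformly, and no case is left out.
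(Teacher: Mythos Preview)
Your proposal is correct and follows essentially the same route as the paper: each of the $24$ parallelogram-window DPVs is MLD to a sheared DP tiling, the Proposition constructing $\phi$ gives a topological conjugacy between each sheared DP system and the plain DP system, the four square-window DP tilings are MLD to one another, and transitivity of topological conjugacy assembles all $28$ into a single class. Your added remark that all twelve shear matrices from \eqref{eq:sh_mat} are covered by the Proposition (both shear directions, all six values of $\alpha$) is a welcome bit of care that the paper leaves implicit.
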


Since the DPVs with fractal windows of different type can not be
topologically conjugate (due to distinct Hausdorff dimensions of the
window's boundaries), the classification up to topological conjugacy
is essentially complete. As mentioned in the beginning, the situation
is more complex than in the case of fully periodic structures. Our
above analysis was still feasible because all tilings with polygonal
windows were either DP tilings or striped versions thereof. Since this
need no longer hold for more complicated DP tilings, it is clear that
other phenomena are to be expected then. This seems an interesting
problem for further work.

\section*{Acknowledgements}

It is our pleasure to thank Natalie Priebe Frank and Lorenzo Sadun for
discussions and useful suggestions. This work was supported by the
German Research Foundation (DFG) within the CRC 1283/2 (2021 -
317210226) at Bielefeld University.

% \section*{Data Availability Statement}
% No data were created, analysed or needed for this work.

\end{document}